\newtheorem{formula}{}[section]
\newtheorem{proposition}[formula]{Proposition}
\newtheorem{corollary}[formula]{Corollary}
\newtheorem{lemma}[formula]{Lemma}
\newtheorem{theorem}[formula]{Theorem}
\newtheorem{question}{Question}
\theoremstyle{definition}
\newtheorem{definition}[formula]{Definition}
\newtheorem{construction}[formula]{Construction}
\newtheorem{example}[formula]{Example}
\theoremstyle{remark}
\newtheorem{remark}[formula]{Remark}
\begin{document}

\title[Hyperbolic links associated to Hamiltonian 
subgraphs in simple $3$-polytopes]{Hyperbolic links associated to Hamiltonian 
subgraphs in simple $3$-polytopes}
\author[N.Yu.~Erokhovets]{Nikolai~Erokhovets}
\address{Department of Mechanics and Mathematics, Lomonosov Moscow State University}
\email{erochovetsn@hotmail.com}

\def\sgn{\mathrm{sgn}\,}
\def\bideg{\mathrm{bideg}\,}
\def\tdeg{\mathrm{tdeg}\,}
\def\sdeg{\mathrm{sdeg}\,}
\def\grad{\mathrm{grad}\,}
\def\ch{\mathrm{ch}\,}
\def\sh{\mathrm{sh}\,}
\def\th{\mathrm{th}\,}

\def\mod{\mathrm{mod}\,}
\def\In{\mathrm{In}\,}
\def\Im{\mathrm{Im}\,}
\def\Ker{\mathrm{Ker}\,}
\def\Hom{\mathrm{Hom}\,}
\def\Tor{\mathrm{Tor}\,}
\def\rk{\mathrm{rk}\,}
\def\codim{\mathrm{codim}\,}

\def\ko{{\mathbf k}}
\def\sk{\mathrm{sk}\,}
\def\RC{\mathrm{RC}\,}
\def\gr{\mathrm{gr}\,}

\def\R{{\mathbb R}}
\def\C{{\mathbb C}}
\def\Z{{\mathbb Z}}
\def\A{{\mathcal A}}
\def\B{{\mathcal B}}
\def\K{{\mathcal K}}
\def\M{{\mathcal M}}
\def\N{{\mathcal N}}
\def\E{{\mathcal E}}
\def\G{{\mathcal G}}
\def\D{{\mathcal D}}
\def\F{{\mathcal F}}
\def\L{{\mathcal L}}
\def\V{{\mathcal V}}
\def\H{{\mathcal H}}




\subjclass[2010]{
57S12, 
57K10, 
57K32, 
52B10, 
57S25, 
52B70, 
52B05
}

\keywords{Convex polytope, Eulerian cycle, ideal right-angled hyperbolic polytope, small cover, Borromean rings, Brunnian link}

\begin{abstract}
In a series of papers A.D.Mednykn and A.Yu.Vesnin introduced a construction 
that for a given right-angled polytope $P$ in geometry $\mathbb L^3$, $\mathbb R^3$, $\mathbb S^3$,
$\mathbb L^2\times \mathbb R$, $\mathbb S^2\times \mathbb R$ and a Hamiltonian
cycle, theta-subgraph or $K_4$-subgraph $\Gamma$ in the $1$-skeleton of $P$ builds a geometric
$3$-manifold $N(P,\Gamma)$ with an involution $\tau$ such that $N(P,\Gamma)/\langle\tau\rangle\simeq S^3$.
The brach set of the corresponding $2$-sheeted branched covering $N(P,\Gamma)\to S^3$ is a link $C_\Gamma\subset S^3$
consisting of trivially embedded circles. This construction reformulated in the language of
toric topology works for such a subgraph $\Gamma$ in any simple $3$-polytope $P$ and gives a
topological $3$-manifold $N(P,\Gamma)$. We give a criterion when $S^3\setminus C_\Gamma$ 
has a complete hyperbolic structure of finite volume and generalize this criterion to
similar links in $3$-manifolds different from $S^3$. We prove that hyperbolic links $C_\Gamma$
are parametrized by nonselfcrossing Eulerian cycles, Eulerian theta-subgraphs and Eulerian $K_4$-subgraphs 
in hyperbolic right-angled $3$-polytopes of finite volume in $\mathbb L^3$ with $0$, $2$ or $4$ finite vertices.
We give a criterion when the link $C_\Gamma$ consists of mutually unlinked circles and 
prove that if such a link is nontrivial, then it contains the Borromean rings. The latter problem is
motivated by the Efimov effect in  quantum mechanics.
\end{abstract}
\maketitle
\tableofcontents
\setcounter{section}{0}
\section{Introduction}
The theory of knots and links is a classical area of mathematics developing since XIX century. 
One of the well-known directions in this area is the theory of hyperbolic links. These are links whose complements admit a complete hyperbolic structure of finite volume.  In the works \cite{M90, VM99S2} A.D.~Mednykh and A.Yu.~Vesnin
introduced a construction  that for a given right-angled polytope $P$ in geometry $\mathbb L^3$, $\mathbb R^3$, $\mathbb S^3$,
$\mathbb L^2\times \mathbb R$, $\mathbb S^2\times \mathbb R$ and a Hamiltonian
cycle, theta-subgraph or $K_4$-subgraph $\Gamma$ in the $1$-skeleton of $P$ builds a geometric
$3$-manifold $N(P,\Gamma)$ with an involution $\tau$ such that $N(P,\Gamma)/\langle\tau\rangle\simeq S^3$.
The brach set of the corresponding $2$-sheeted branched covering $N(P,\Gamma)\to S^3$ is a link $C_\Gamma\subset S^3$
consisting of trivially embedded circles. This construction reformulated in the language of
toric topology works for such a subgraph $\Gamma$ in any simple $3$-polytope $P$ and gives a
topological $3$-manifold $N(P,\Gamma)$. We give a criterion when $S^3\setminus C_\Gamma$ 
has a complete hyperbolic structure of finite volume (Theorem~\ref{thGHL}) and generalize this criterion to
similar links in $3$-manifolds different from $S^3$ (Theorem~\ref{thHL}). A a corollary we prove (Theorem~\ref{thHLP}) that 
hyperbolic links $C_\Gamma$ are parametrized by nonselfcrossing Eulerian cycles, 
theta-subgraphs and $K_4$-subgraphs 
in hyperbolic right-angled $3$-polytopes of finite volume in $\mathbb L^3$ with $0$, $2$ or $4$ finite vertices.  

In particular hyperbolic links $C_\Gamma$ corresponding to Hamiltonian cycles are parametrised by
nonselfcrossing Eulerian cycles in ideal right-angled $3$-polytopes $Q$ in $\mathbb L^3$. Usually ideal right-angled polytopes 
arise when the alternating diagram of a link is reduced to some canonical form (see, for example \cite{CKP22}). 
In our approach, the link $C_\Gamma$ consists of trivial circles
corresponding to vertices of the ideal right-angled polytope and their structure is defined by the Eulerian cycle. 
Any $k$-antiprism has a canonical Eulerian cycle. In this case, our decomposition of the complement
to the $(2k)$-link chain into $4$ antiprisms coincides with the decomposition described by W.P.~Thurston \cite[Example 6.8.7]{T02}.

The link corresponding to a Hamiltonian cycle in a simple $3$-polytope always contains the~Hopf link
consisting of two trivially embedded circles linked once. On the other hand, a theta-subgraph in the cube $I^3$
corresponds to the Borromean rings (Example~\ref{exthetaC}). We give a criterion 
when the link $C_\Gamma$ corresponding to a Hamiltonian theta-subgraph (Theorem~\ref{Th:unl}) or a 
Hamiltonian $K_4$-subgraph (Theorem~\ref{K4UL}) consists of mutually unlinked circles and prove that if
such a link is nontrivial, then it contains the Borromean rings. This is a partial answer to the question posed by Victor Buchstaber: {\it using technique of toric topology to build a rich family of Brunnian links, that is nontrivial links that become a set of trivial unlinked circles if any one component is removed}.  The question is motivated by the notion 
of a {\it Efimov state} \cite{E70} in  quantum mechanics. This is a bound state of three bosons such that the two-particle attraction is too weak to allow two bosons to form a pair. If one of the particles is removed, the remaining two fall apart. These three
bosons are governed by a three-body force (see \cite{EENNST25}). The Efimov sate is  symbolically depicted by the Borromean rings. Thus, our links $C_\Gamma$ may symbolically depict certain configurations of particles governed by three-body forces
with many triples of Efimov states. 

The Borromean rings is the first nontrivial  example of a~Brunnian link. Such links may correspond to Brunnian $n$-body
systems characterized by the complete absence of bound subsystems \cite{YFJ11}.
It turns out that except for the Borromean rings the link $C_\Gamma$ is not Brunnian. 
Recently in \cite{RV25}, a family of hyperbolic Brunnian links was constructed by other methods
starting from links $L_{3n+2}$ consisting of $3n+2$ components
with the complement $S^3\setminus L_{3n+2}$ decomposed into $4$ right-angled hyperbolic $(2n)$-antiprisms $A_{2n}$.
\section{Basic facts}
\subsection{The Steinitz theorem for $3$-polytopes}
For the theory of polytopes we refer to \cite{Z07,Gb03}.
By a {\it $3$-polytope} we mean a convex $3$-dimensional polytope. Moreover, in many cases 
we implicitly consider a {\it combinatorial polytope}, that is a class of combinatorial equivalence of $3$-polytopes.
 A $3$-polytope is {\it simple}, if any its vertex belongs to $3$ edges. A {\it face} of a plane graph is a connected component
 of its complement. By a graph $G(P)$ of a polytope $P$ we mean its $1$-skeleton consisting of vertices and edges. 
 \begin{theorem}[The Steinitz theorem]\label{StT}
A plane graph with more than one vertex is a graph of a $3$-polytope $P$ if and only if 
it has no loops and multiple edges, any its face is bounded by a simple edge-cycle, 
and if two such boundary cycles  intersect, then their intersection is a  vertex or and edge.
\end{theorem} 
Moreover, the Whitney theorem states that any two embeddings of the graph of a $3$-polytope to the plane
can be lifted to the combinatorial equivalence of the plane graphs (a bijection between sets of vertices, edges and faces
preserving the incidence relation).  See more details in \cite[Corollary 2.5.1]{BE17I}.

\subsection{Manifolds defined by vector-colorings of simple polytopes}
The following construction arises in toric topology \cite{BP15, DJ91}. We will give the construction and further
details following \cite{E24,E26}. The proofs of most fact mentioned below can be found there.

\begin{definition}
A vector-coloring of rank $r$ of a simple $3$-polytope $P$ is a mapping $\Lambda$ from the 
set of its facets $F_1,\dots, F_m$ to $\mathbb Z_2^r$, $F_i\to \Lambda_i$, such that 
$\langle\Lambda_1,\dots,\Lambda_m\rangle=\mathbb Z_2^r$. It corresponds to the space
$$
N(P,\Lambda)=P\times \mathbb Z_2^r/\sim, (p,a)\sim (q,b) \text{ if and only if } p=q\text{ and }a-b\in\langle \Lambda_i\colon p\in F_i\rangle. 
$$  
This space has an action of $\mathbb Z_2^r$. 

A vector-coloring $\Lambda$ is called {\it linearly independent}, if for any vertex $v=F_i\cap F_j\cap F_k$ the vectors $\Lambda_i$,
$\Lambda_j$ and $\Lambda_k$ are linearly independent.  
\end{definition}
It is known that for a linearly independent vector-coloring $\Lambda$ the space $N(P,\Lambda)$
is a closed manifold. More generally, $N(P,\Lambda)$ is a  topological manifold (possibly with a boundary)
if and only if for each vertex $v=F_i\cap F_j\cap F_k$ different nonzero vectors among $\Lambda_i,\Lambda_j$, and $\Lambda_k$, are linearly independent. The boundary is glued of facets with $\Lambda_i=0$. The
manifold $N(P,\Lambda)$ is orientable if and only there is a linear function $c\in (\mathbb Z_2^r)^*$ such that
$c(\Lambda_i)=1$ for each nonzero $\Lambda_i$. Equivalently, in some coordinate system $\Lambda_i=(1,\lambda_i)$, 
$\lambda_i\in\mathbb Z_2^{r-1}$,  for each nonzero $\Lambda_i$. 

\begin{proposition}{\rm (see \cite[Corollary 1.12]{E24})}
Let $\Lambda$ be a vector-coloring of rank $r$ of a simple polytope $P$.
For a subgroup $H\subset \mathbb Z_2^r$ we have $N(P,\Lambda)/H\simeq N(P,\Lambda_H)$,
where $\Lambda_H$ is a vector coloring of rank $r-1$ obtained as the composition $\pi\circ\Lambda$, where
$\pi$ is the projection $\mathbb Z_2^r\to\mathbb Z_2^r/H$.
\end{proposition}

If $\Lambda_i=e_i$, where $e_1$, $\dots$, $e_m$ is a standard basis in $\mathbb Z_2^m$, then the space
$N(P,\Lambda)$ is called a {\it real moment-angle manifold} and is denoted $\mathbb R\mathcal{Z}_P$. It is an orientable manifold
with a canonical action of $\mathbb Z_2^m$.
Any space $N(P,\Lambda)$ is an orbit space of an action of a subgroup $H\subset \mathbb Z_2^m$
on $\mathbb R\mathcal{Z}_P$. The action of $H$ is free if and only if $\Lambda$ is linearly independent. In this case 
$\mathbb R\mathcal{Z}_P\to N(P,\Lambda)$ is a finite-sheeted covering.

\subsection{The Vesnin-Mednykh construction}
\begin{construction}\label{VMC}
For a compact right-angled polytope $P$ in $\mathbb X=\mathbb L^3$, $\mathbb R^3$, $\mathbb S^3$,
$\mathbb L^2\times \mathbb R$, or $\mathbb S^2\times \mathbb R$ and a linearly independent vector-coloring 
$\Lambda$ of rank $r$ the manifold $N(P,\Lambda)$ has geometric structure modelled on $\mathbb X$. The following
construction goes back to the papers \cite{M85,MV86,V87,V17} by A.Yu.~Vesnin and A.D.~Mednykh.
Let $\mathcal{C}(P)$ be a right-angled Coxeter group generated by reflections $\rho_i$ in hyperplanes containing
the facets of $P$. Then $\Lambda$ defines an epimorphism $\varphi_\Lambda\colon\mathcal{C}(P)\to\mathbb Z_2^r$
by the rule $\rho_i\to \Lambda_i$. Then ${\rm Ker}\,\varphi_\Lambda$ is a discrete group of isometries of $\mathbb X$
and it acts freely on $\mathbb X$ with the orbit space being the geometric manifold. It can be shown that 
this manifold is homeomorphic to $N(P,\Lambda)$ (see more details in \cite[Construction 1.2]{E26}). 
This construction can be also applied  to a right-angled polytope of finite volume and a vector coloring $\Lambda$
such that for any set of facets $\{F_i\}$ having a common point inside $\mathbb X$ the corresponding vectors $\Lambda_i$
are linearly independent. For example, a right-angled polytope $P\subset\mathbb L^3$ may have vertices at infinity
(ideal vertices). Each ideal vertex has valency $4$ and is contained in $4$ facets $(F_i, F_j, F_k, F_l)$,
such that successive facets in this cyclic sequence have a common edge and should have different vectors $\Lambda(F)$,
but the vectors $\Lambda_i$ and $\Lambda_k$ may coincide, as well as $\Lambda_j$ and~$\Lambda_l$.    
\end{construction}
\begin{remark}\label{Rcusp}
It is east to see that the manifold $\mathbb L^3/{\rm Ker}\,\varphi_\Lambda$ is homeomorphic
to the interior of the manifold $N(\widehat P,\widehat{\Lambda})$, where $\widehat{P}$ is obtained from $P$ by cutting
off all the ideal vertices and $\widehat{\Lambda}$ sends old facets $F_i$ to $\Lambda_i$ and new quadrangles to $0$.
\end{remark}
\subsection{Hyperelliptic manifolds}
\begin{definition}\label{def:hypm}
A {\it hyperelliptic manifold}\index{hyperelliptic manifold} $M^n$ is an $n$-manifold with an~action of an involution $\tau$ such that $M^n/\langle\tau\rangle$ 
is~homeomorphic to $S^n$. The involution $\tau$ is called {\it hyperelliptic}\index{hyperelliptic involution}.  
\end{definition}

\begin{definition}
A graph $G$ is {\it cubic} graph if any its vertex has valency $3$. A {\it theta-subgraph} in a cubic graph
consists of two different vertices and three simple paths connecting these vertices. The paths have no common vertices except for their ends. A {\it $K_4$-subgraph} in a cubic graph consists of $4$ different 
vertices and $6$ simple paths connecting these vertices.
Each pair of vertices is connected by a path and different paths have no common vertices except for their ends.  

A {\it Hamiltonian cycle} in a graph $G$ is a cycle passing each vertex of $G$ exactly once.
A theta-subgraph or a $K_4$-subgraph $\Gamma$ in a graph $G$ is called {\it Hamiltonian}, if
any vertex of $G$ lies in $\Gamma$. 
For short we will call by a {\it Hamiltonian subgraph} a Hamiltonian cycle, a Hamiltonian theta-subgraph,
or a Hamiltonian $K_4$-subgraph.  

A {\it matching} $M$ of a graph $G$ is a disjoint set of its edges. A matching is {\it perfect} if it covers all the vertices of $G$. 
For a Hamiltonian subgraph $\Gamma\subset G$ denote by $M_\Gamma$ the matching in $G$
consisting of edges not lying in $\Gamma$.

A graph of a simple $3$-polytope $P$
is cubic.  By a subgraph or a matching in $P$ we mean a subgraph or a matching in its graph $G(P)$.

\end{definition}

\begin{construction}[A hyperelliptic manifold from a Hamiltonian subgraph]\label{HCC}
Let $\Gamma$ be a~Hamiltonian subgraph in a 
simple $3$-polytope~$P$.  The subgraph $\Gamma$ divides $\partial P\simeq S^2$ into $k=2$ (for the cycle), 
$k=3$ (for the theta-subgraph) or $k=4$ (for the $K_4$-subgraph) disks.  Each edge in $M_\Gamma$ divides one of the disks
into two disks. Thus, the adjacency graph of faces of $P$ lying in the closure of 
each component of $\partial P\setminus\Gamma$  is a tree and these faces can be colored in two colors (black and white) 
in such a way that adjacent faces have different colors. 
Let $a_1$, $\dots$, $a_k$, $\tau$ be a basis in $\mathbb Z_2^{k+1}$. Define $b_i=a_i+\tau$. 
Assign to each facet of $P$ in $i$-th component of $\partial P\setminus\Gamma$ the vector $a_i$ if it is white and $b_i$
if it is black. We obtain the vector-coloring $\widetilde{\Lambda}_{\Gamma}$ of rank $k+1$
and the orientable manifold $N(P,\widetilde{\Lambda}_{\Gamma})$ with the action of $\mathbb Z_2^{k+1}$. 
Then $\tau$ is a hyperelliptic involution and 
$N(P,\widetilde{\Lambda}_{\Gamma})/\langle\tau\rangle=N(P,\Lambda_{\Gamma})\simeq S^3$,
where $\Lambda_\Gamma$ is the composition $\pi\circ\widetilde{\Lambda}_{\Gamma}$, 
$\pi\colon \mathbb Z_2^{k+1}\to \mathbb Z_2^{k+1}/\langle\tau\rangle\simeq \mathbb Z_2^k$.
\end{construction}

The homeomorphism $N(P,\Lambda_{\Gamma})\simeq S^3$ can be seen as follows. 
All facets of $i$-th connected component of $P\setminus\Gamma$ are colored in the same vector $[a_i]\in \mathbb Z_2^k$,
and the vectors $[a_1],\dots,[a_k]$ form a basis in $\mathbb Z_2^k$. 

For the cycle $\Gamma$ there is a homeomorphism of $P\setminus x$, where $x$ is an interior point of some edge in $\Gamma$,
to the quaterspace defined by inequalities $y\geqslant 0$ and $z\geqslant 0$. 
Then the space $N(P,\Lambda_{\Gamma})\setminus [x\times \mathbb Z_2^2/\sim=pt]$ is equivariantly 
homeomorphic to $\mathbb R^3$ with the involutions $[a_1]$ and $[a_2]$ 
corresponding to the change of the sign of the $y$- and $z$-coordinates.

For the theta-subgraph $\Gamma$ let $v$ and $w$ be its vertices. Then there is a homeomorphism of $P\setminus w$
to the positive octant in $\mathbb R^3$ mapping $v$ to the origin and the paths to coordinate rays.  
Then the space $N(P,\Lambda_{\Gamma})\setminus [w\times \mathbb Z_2^3/\sim=pt]$ is equivariantly homeomorphic 
to $\mathbb R^3$ with the involution $[a_i]$ corresponding to the change of the sign of the $i$-th coordinate.

For the $K_4$-subgraph $\Gamma$ the complex in $\partial P$ given by edges and faces of this graph is 
homeomorphic to the boundary complex of the simplex $\Delta^3$, and $N(P,\Lambda_{\Gamma})$ is
equivariantly homeomorphic to the real moment-angle manifold $\mathbb R\mathcal{Z}_{\Delta^3}\simeq S^3$. 
It can be visualised similarly as for the theta-subgraph. Namely, there is a homeomorphism of $P$
to the simplex $\Delta^3$ that is the convex hull of the origin and the ends of the three basis vectors. Then the vectors
corresponding to three coordinate facets correspond to reflections in these facets.
Gluing $8$ copies of  $\Delta^3$ we obtain the octahedron $Oct^3$. Also for each octant the complement to the reflected
copy of $\Delta^3$ is homeomorphic to $\Delta^3\setminus\{\text{Origin}\}\simeq P\setminus\{v\}$, 
where $v$ is a vertex of $\Gamma$.
Then these complements are glued to $\mathbb R^3\setminus Oct^3$. 

\begin{remark}
If $\Gamma$ is a Hamiltonian cycle, theta- or $K_4$-subgraph in a compact right-angled polytope $P$ in the geometry $\mathbb X=\mathbb L^3$, $\mathbb R^3$, $\mathbb S^3$,
$\mathbb L^2\times \mathbb R$, or $\mathbb S^2\times \mathbb R$, then the manifold
$N(P,\widetilde{\Lambda}_{\Gamma})$ defined via Construction \ref{VMC} is exactly the hyperelliptic manifold
defined in \cite{M90, VM99S2}.   
\end{remark}
\begin{remark}\label{RUG}
It can be shown that for a linearly independent vector-coloring $\Lambda$ of rank $r$ of a simple $3$-polytope 
$P$ if there is a hyperelliptic involution in $\mathbb Z_2^r$, then there is a Hamiltonian cycle, theta- or $K_4$-subgraph
$\Gamma$ in $P$ and a change of coordinates in $\mathbb Z_2^r$ such that $\Lambda=\widetilde{\Lambda}_\Gamma$
and the involution is $\tau$ (see \cite[Theorem 11.5]{E24}).
 
\end{remark}
\subsection{Links defined by Hamiltonian subraphs}

The mapping $N(P,\widetilde{\Lambda}_{\Gamma})\to N(P,\Lambda_{\Gamma})\simeq S^3$ is a $2$-sheeted branched covering with the following branch set (see details in \cite[Section 4.5]{E26}).
The edges of $P$ not lying in $\Gamma$ form a matching $M_{\Gamma}$ of  $G(P)$ and the preimage of this set in 
$N(P,\widetilde{\Lambda}_{\Gamma})$ and in $S^3$ is a disjoint set of circles $C_{\Gamma}$. This link is the branch set of the covering. 
\begin{itemize}
\item For $k=2$ each edge of $M_{\Gamma}$ corresponds to a circle glued of two copies of the edge. 
\item For $k=3$ each edge of $M_{\Gamma}$ corresponds either to a~circle glued of $4$ copies of this edge 
(if the edge has vertices on different paths of $\Gamma$), 
or to a~pair of circles each glued of $2$ copies of the edge (if the edge has vertices on the same path of $\Gamma$). 
\item For $k=4$ it corresponds either to a~pair of circles glued of $4$ copies of the edge (if the edge has vertices on different paths of $\Gamma$), or to $4$ circles each glued of $2$ copies of the edge (if the edge has vertices on the same path of $\Gamma$). 
\end{itemize}

The link $C_\Gamma$ can be visualised in $\mathbb R_3$ as follows. For the above identification
of $P\setminus\{point\}$ with a quaterspace ($k=2$), orthant ($k=3$) and $\Delta^3\setminus\{point\}$,
we can draw each edge in $M_\Gamma$ as a semicircle lying in the part of a plane corresponding
to the facet of $\Gamma$, if its vertices  lie on the same path (or circle) of $\Gamma$, and by a straight
segment, if they lie on different paths. Then after all the ``reflections'' producing  $\mathbb R^3$ from $P$
we obtain the explicit realization of  $C_\Gamma$ in $\mathbb R^3$. The author is grateful to D.A.~Tsygankov~\cite{T25}
and D.V.~Chepakova~\cite{C23} for the idea of this realization for $k=2$.
\begin{remark}
The detailed description of the link $C_\Gamma$ corresponding to a Hamiltonian cycle $\Gamma$ in terms of bipartite chord
diagrams is given by Vladimir Gorchakov in \cite{G24}.  He proved that the bridge index of $C_\Gamma$ is equal to $l$, where 
$2l$ is the number of vertices of $P$.
\end{remark}
\begin{example}
In Fig.~\ref{Hopf} we show the link corresponding to a Hamiltonian cycle in the simplex. It is the Hopf link
consisting of two trivially embedded circles liked once.
\end{example}
\begin{figure}
\begin{center}
\includegraphics[width=0.5\textwidth]{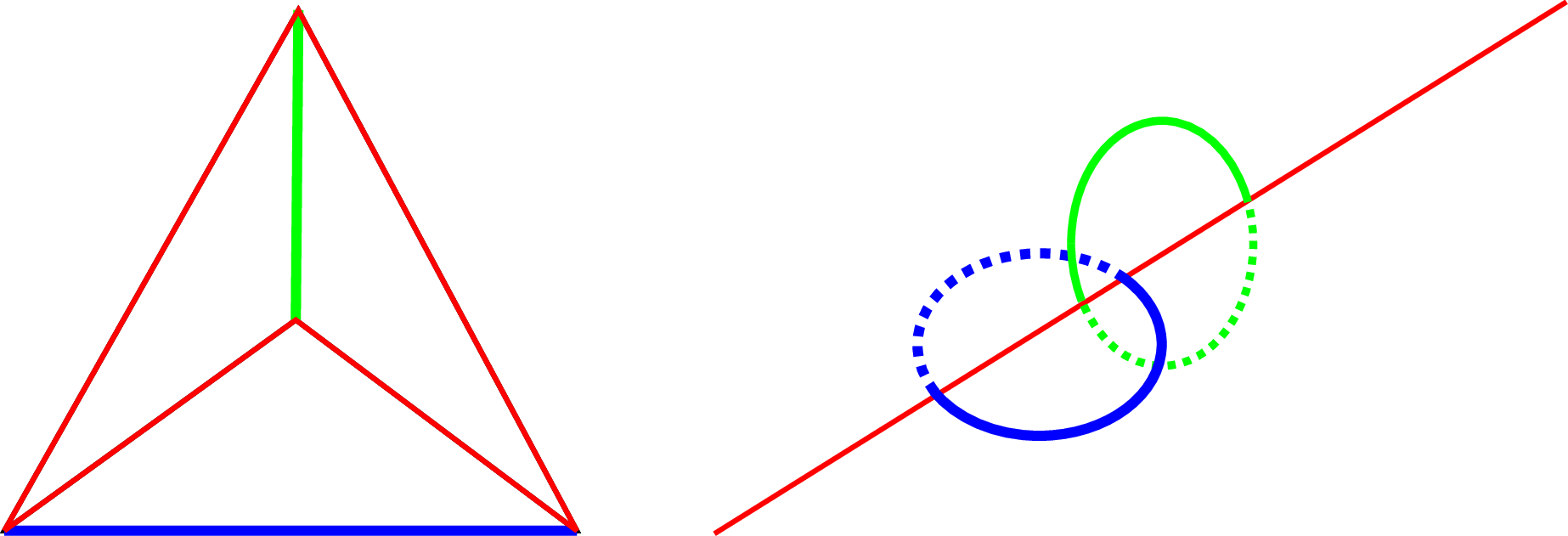}
\caption{The Hopf link corresponding to a Hamiltonian cycle in the simplex}
\label{Hopf}
\end{center}
\end{figure}

\begin{example}\label{exthetaC}
In Fig.~\ref{Borromeo} we show the link corresponding to a Hamiltonian theta-subgraph in the cube.
It is the Borromean rings. The manifold $N(P,\widetilde{\Lambda}_{\Gamma})$ has a Euclidean structure, and, as we will seen in 
Example~\ref{ex:BorAs}, the complement $S^3\setminus C_\Gamma$ has a hyperbolic structure.
This example corresponds to \cite[Example 13.1.5]{T02}, where the complement to the Borromean rings in $S^3$
is glued from the cube with $6$ segments on its boundary deleted. This cube can be divided into $8$ cubes 
in such a way that the halves of the segments form triples of edges complementary to the Hamiltonian theta-subgraph. 
These $8$ cubes are glued exactly as in $N(P,\Lambda_{\Gamma})\simeq S^3$. The author is grateful to Vladimir
Gorchakov for pointing out a connection between these examples.
\end{example}
\begin{figure}
\begin{center}
\includegraphics[width=\textwidth]{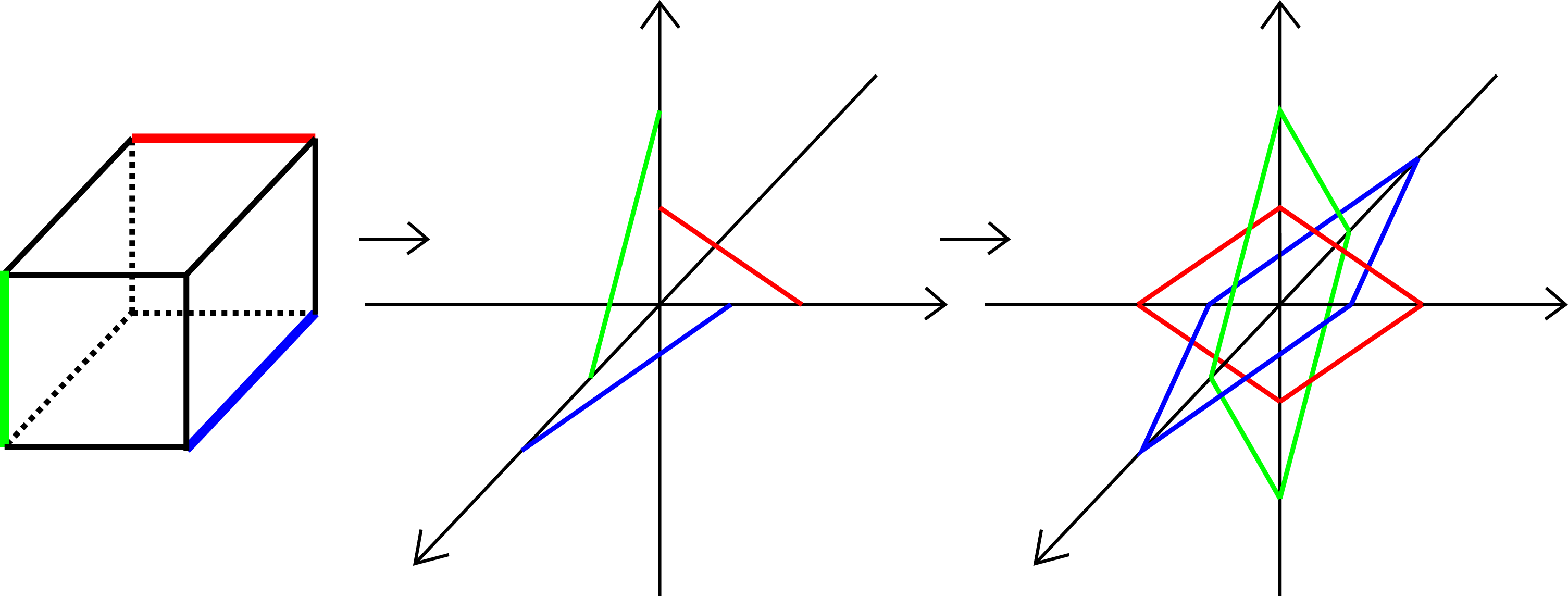}
\caption{The Borromean rings corresponding to a Hamiltonian theta-subgraph in the cube}
\label{Borromeo}
\end{center}
\end{figure}
\begin{remark}
In algebraic topology the Borromean rings are associated to the triple {\it Massey product} -- an operation producing
a new cohomology class from three classes with trivial pairwise products. In particular, for the complement of the Borromean rings 
the triple Massey product is defined and non-zero \cite{M68}. The product of the $1$-cochains 
dual to the $3$ rings via Alexander duality is zero, while the triple Massey product is non-zero. 
As we will see in Example~\ref{ex:BorAs}, toric
topology associates to the Borromean rings also a compact $12$-dimensional manifold with a nontrivial triple Massey product.
It is the moment-angle manifold of the $3$-dimensional associahedron.
\end{remark}

\subsection{Links in manifolds different from the sphere}
The link $C_\Gamma\subset S^3$ can be generalized as follows.
\begin{proposition}{\rm (see \cite[Proposition 4.15]{E26})}
For a linearly independent vector-coloring $\Lambda$ of rank $r$ of a simple $3$-polytope $P$ and an involution
$\tau\in\mathbb Z_2^r\setminus\{0\}$ the orbit space 
$N(P,\Lambda)/\langle\tau\rangle=N(P,\Lambda_\tau)$ is a closed manifold if and only if $\tau\ne \Lambda_j$
for any facet $F_j$ and $\tau\ne\Lambda_i+\Lambda_j+\Lambda_k$ for any vertex $v=F_i\cap F_j\cap F_k$ of $P$.
\end{proposition}

\begin{construction}[Links corresponding to an involution $\tau$]
For a linearly independent vector-coloring $\Lambda$ of rank $r$ of a simple $3$-polytopes $P$ and an involution
$\tau\in\mathbb Z_2^r$ if the orbit space $N(P,\Lambda)/\langle\tau\rangle$ is a closed manifold, then
the mapping $N(P,\Lambda)\to N(P,\Lambda)/\langle\tau\rangle$ is a $2$-sheeted branched covering with the branch set being 
the link in the manifold $N(P,\Lambda)/\langle\tau\rangle$ 
consisting of circles corresponding to edges $E_{i,j}=F_i\cap F_j$ of $P$ 
such that $\Lambda_i+\Lambda_j=\tau$. Denote this set of edges $M_\tau$. This is a matching, for otherwise in
some vertex $F_i\cap F_j\cap F_k$ we have $\tau=\Lambda_i+\Lambda_j=\Lambda_j+\Lambda_k$ and $\Lambda_i=\Lambda_j$.
The edge $E_{i,j}$ connecting the vertices $F_i\cap F_j\cap F_k$
and $F_i\cap F_j\cap F_l$ corresponds to $2^{r-2-c_{i,j}}$ circles, where $c_{i,j}=2$, if $\Lambda_i$, $\Lambda_j$,
$\Lambda_k$ and $\Lambda_l$ are linearly independent, and $c_{i,j}=1$, otherwise. Each circle is glued of $2^{c_{i,j}}$ copes
of $E_{i,j}$. The details see in \cite[Proposition 4.31]{E26}). We will denote this link $C_\tau$. 

If $M_\tau=\varnothing$, then $\tau$ acts freely,
the mapping $N(P,\Lambda)\to N(P,\Lambda)/\langle\tau\rangle$ is a $2$-sheeted covering, and $C_\tau=\varnothing$.
\end{construction}
\begin{remark}
If $N(P,\Lambda_\tau)$ is orientable, then its Kneser-Milnor decomposition and Thurston geometric decomposition
can be explicitly described using \cite[Lemmas 10.4 and 10.6]{E24} and \cite[Theorems 3.12 and 4.12]{E22a}. 
Similarly for $N(P,\Lambda)$.
\end{remark}

\section{Criterion when the link $C_\tau$ is hyperbolic}

\subsection{Right-angled polytopes in $\mathbb L^3$}
\begin{definition}
A {\it $k$-belt} of a $3$-polytope 
is a cyclic sequence of $k$ facets such that facets are adjacent if and only if they are successive and no
three facets have a common vertex. 

A simple $3$-polytope is {\it flag} if $P\ne\Delta^3$ and $P$ has no $3$-belts.

A simple $3$-polytope is {\it almost Pogorelov}, if it is flag any $4$-belt surrounds a quadrangle. 

A simple $3$-polytope is {\it Pogorelov}, if it is flag and has no $4$-belt.
\end{definition}
It can be shown that if an almost Pogorelov polytope $P$  has adjacent quadrangles, then $P$ is the $4$-prism (cube) or the 
$5$-prism.

It follows from results by A.V.~Pogorelov and E.M.~Andreev that a 
$3$-polytope is combinatorially equivalent to a~compact right-angled hyperbolic $3$-polytope if and only if
it is a Pogorelov polytope (see more details in \cite{E19}). 

\begin{definition}
Denote by $\mathcal{R}$ the family of combinatorial polytopes realizable in the Lobachevsky space $\mathbb L^3$ 
as polytopes of finite volume with right dihedral angles.
\end{definition}
Each finite vertex of a right-angled polytope of finite volume in $\mathbb L^3$ 
has valency $3$, and ideal vertices have valency $4$. Using Andreev's theorem it can be shown that cutting off ideal vertices 
defines a bijection between the family $\mathcal{R}$ and the family of almost Pogorelov polytopes different from the cube and the $5$-prism (see
\cite[Theorem 10.3.1]{DO01} and \cite[Theorem 6.5]{E19}). Moreover, all quadrangles of the resulting
polytope arise from ideal vertices. 
\subsection{Contraction of a matching}
In this section we will give a criterion when the plane graph obtained by contraction of edges of a matching  $M$
in the graph of a simple $3$-polytope $P$ is the graph of a polytope in $\mathcal{R}$.
\begin{definition}
For a $k$-belt $\mathcal{B}=(F_{i_1},\dots, F_{i_k})$ define the matching 
$M_\mathcal{B}=\{F_{i_1}\cap F_{i_2},\dots, F_{i_{k-1}}\cap F_{i_k}, F_{i_k}\cap F_{i_1}\}$. We will say that the belt has
$l$ edges in $M$, if $|M_\mathcal{B}\cap M|=l$.
For a matching $M$
in a simple $3$-polytope $P$ denote by $G(P)/M$ the plane graph obtained by contraction of all the edges in~$M$.
Also denote by $P_M$ the polytope obtained by cutting off all the edges of $P$ by different planes.
\end{definition}
\begin{theorem}\label{thMH}
Let $M$ be a matching in a simple $3$-polytope $P$. If $P=\Delta^3$ or $\Delta^2\times I$, then $G(P)/M$ 
is not a graph of a polytope in $\mathcal{R}$. If $P\ne \Delta^3, \Delta^2\times I$, then $G(P)/M$ is 
a graph of a polytope in $\mathcal{R}$ if and only if the following two conditions hold:
\begin{enumerate}
\item any $3$-belt of $P$ has at least two edges in $M$;
\item any $4$-belt of $P$ has at least one edge in $M$;
\end{enumerate} 
\end{theorem}
\begin{remark}
In particular, if $P$ is a Pogorelov polytope, then for any matching $M$ the graph
$G(P)/M$ is a graph of a polytope in $\mathcal{R}$ 
(this follows also from  \cite[Theorems 6.5 and 11.6]{E19}). The contraction of edges of right-angled hyperbolic polytopes producing right-angled polytopes of finite volume is discussed in \cite{BD25}. In particular, the antiprism $A_n$ is obtained by a~contraction of a perfect matching of the L\"obell polytope ($n$-barrel) $L_n$. The inverse operation corresponds to the {\it hyperbolic Dehn filling}.
\end{remark}
\begin{proof}[Proof of Theorem~\ref{thMH}]
The graph $G(P)/M$ is a graph of a polytope in $\mathcal{R}$ if and only if the polytope $P_M$
is an almost Pogorelov polytope different from the cube and the $5$-prism and all the quadrangles of 
$P_M$ arise from edges in $M$ (we will call such quadrangles {\it $M$-quadrangles}). 
That is, $P_M$ should be flag and any $4$-belt should 
surround an $M$-quadrangle. In particular, if $\mathcal{B}$ is a $3$-belt in $P$,
then it should have at least two edges in $M$. For otherwise, in $P_M$ there is either a $3$-belt, or a $4$-belt not 
corresponding to an edge in $M$. Similarly,  any  $4$-belt in $P$ should have at least one edge in $M$.

The simplex $P=\Delta^3$ has two combinatorially different 
matchings. The first matching has one edge and $P_M=\Delta^2\times I$ has a $3$-belt. The second matching has two edges, and 
$P_M=I^3$. 

The prism $P=\Delta^2\times I$ has a unique $3$-belt $\mathcal{B}$. If $G(P)/M$ is a graph of a polytope in 
$\mathcal{R}$, then by the above argument at least two
edges from $M(\mathcal{B})$ belong to $M$. If $M$ has exactly two edges, then $P_M$ is the $5$-prism. A contradiction.
Otherwise, $M$ consists of three edges, and $M(\mathcal{B})=M$. Then $P_M$ is the $6$-prism, 
which is not almost Pogorelov. A contradiction.

Thus, we have proved the theorem in one direction. Now let $P\ne \Delta^3, \Delta^2\times I$ satisfy the condition of the theorem.

Let $P_M$ have a $3$-belt $\mathcal{B}$. 
If it contains no $M$-quadrangles, then $P$ has a $3$-belt  $\mathcal{B}'$ with no edges in $M$.
A contradiction. If it contains an $M$-quadrangle, then this quadrangle corresponds to some edge 
$F_i\cap F_j$ of $P$ and the other two faces of $\mathcal{B}$ correspond to facets $F_k$ and $F_l$ different from $F_i$ and $F_j$ such that $F_k\cap F_l$ is an edge of $P$. Then $F_i\cap F_k\cap F_l$ is a vertex, for otherwise $(F_i,F_k,F_l)$ 
is a $3$-belt without edges in $M$. Then $F_i$ is
a triangle. Similarly, $F_j$ is a triangle. Then $P=\Delta^3$, which is a contradiction. Thus, $P_M$ has no $3$-belts. 
In particular, any $M$-quadrangle is surrounded by a $4$-belt.

Let $P_M$ have a $4$-belt  $\mathcal{B}$ not surrounding an $M$-quadrangle.
If it contains no $M$-quadrangles, then in $P$ it corresponds to a cyclic sequence of $4$ facets 
$\mathcal{B}'=(F_i,F_j,F_k,F_l)$ such that successive facets are adjacent and  $M(\mathcal{B}')\cap M=\varnothing$.
If $F_i\cap F_k\ne\varnothing$, then $F_i\cap F_k\in M$. We have $F_i\cap F_k\cap F_j$ is a vertex, for otherwise 
$(F_i,F_k,F_j)$ is a $3$-belt with one edge in $M$. Similarly,  $F_i\cap F_k\cap F_l$ is a vertex. Then $\mathcal{B}$ is 
a~$4$-belt corresponding to the edge $F_i\cap F_k\in M$.
A contradiction. By the same argument  $F_j\cap F_l=\varnothing$. Then $\mathcal{B}'$ is a $4$-belt without edges in $M$. A contradiction.

If $\mathcal{B}$ contains exactly one $M$-quadrangle, then this quadrangle corresponds to some edge 
$F_i\cap F_j$ of $P$ and the other three faces of $\mathcal{B}$ correspond to facets $F_k$, $F_l$, and $F_r$ of $P$
such that $F_k\cap F_l$ and $F_l\cap F_r$ are edges not in $M$.
There are two possibilities. Either $F_i\cap F_j\cap F_k$ and $F_i\cap F_j\cap F_r$ are vertices of $F_i\cap F_j$, or $F_i=F_k$ 
and $F_j=F_r$. 

In the first case consider the cyclic sequence of facets $(F_i,F_k, F_l,F_r)$.  If $F_k\cap F_r\ne \varnothing$,
then $F_i\cap F_k\cap F_r$  is a vertex, for otherwise $(F_i,F_k,F_r)$ is a $3$-belt with one edge in $M$. Then $F_i$
is a triangle. Similarly, $F_j$ is a triangle. Then $P=\Delta^3$. A contradiction. Thus, $F_k\cap F_r=\varnothing$.
If $F_i\cap F_l\ne\varnothing$, then $F_i\cap F_l\cap F_k$ is a vertex, for otherwise $(F_i,F_l,F_k)$ is a $3$-belt
with at most one edge in $M$. Similarly, $F_i\cap F_l\cap F_r$ is a vertex. Then $F_i$ is a quadrangle. 
If $F_j\cap F_l\ne\varnothing$, then similarly $F_j$ is a quadrangle, and $P=\Delta^2\times I$. A contradiction. Thus,
$F_j\cap F_l=\varnothing$ and $(F_l,F_k,F_j,F_r)$ is a $4$-belt with no edges in $M$. A contradiction. Thus, $F_i\cap F_l=\varnothing$ and $(F_i,F_k,F_l,F_r)$ is a $4$-belt with no edges in $M$. A contradiction. Thus, the first case is impossible.

In the second case $F_i\cap F_j\cap F_l=\varnothing$, since $\mathcal{B}$ is a belt. Then $(F_i,F_j,F_l)$ is a $3$-belt
with only one edge in $M$. A contradiction. Thus, the case when $M$ contains exactly one $M$-quadrangle is impossible. 

If $\mathcal{B}$ contains two $M$-quadrangles (corresponding to edges $E_1$ and $E_2$ of $P$),
then they are not adjacent and the other two facets of $\mathcal{B}$
correspond to facets $F_i$ and $F_j$ of $P$. It is not possible that $E_1,E_2\subset F_i\cap F_j$.
Thus, there are two possibilities: either one of these edges is $F_i\cap F_j$ and the other edge intersects $F_i$ and $F_j$
at vertices, or both edges intersect $F_i$ and $F_j$ at vertices.
In the first case, let the other edge be $F_k\cap F_l$. Then $F_i\cap F_j\cap F_k\ne\varnothing$, for otherwise $(F_i,F_j,F_k)$
is a $3$-belt with only one edge in $M$. Then $F_k$ is a triangle. Similarly, $F_l$ is a triangle, and $P=\Delta^3$. A contradiction.
In the second case let $E_1=F_k\cap F_l$ and $E_2=F_r\cap F_s$.  
Assume that $F_i\cap F_j\ne \varnothing$. Then $F_i\cap F_j\cap F_k\ne\varnothing$ and $F_k$ is a triangle, for otherwise 
$(F_i,F_j,F_k)$ is a $3$-belt with exactly one edge $F_i\cap F_j$ in $M$. Similarly, $F_l$ is a triangle. A contradiction. Thus, 
$F_i\cap F_j=\varnothing$. Consider the line segments $I_1$ in $F_i$ connecting the vertices $F_i\cap E_1$ and $F_i\cap E_2$. 
Similarly take $I_2$ in $F_j$. Then $(I_1,E_1,I_2,E_2)$ is a simple curve on the boundary of $P$. It divides $\partial P$ into two connected components. Let $F_k$ and $F_r$ lie in the closure of one component, and $F_l$ and $F_s$ lie in the closure
of the other.
If $F_k=F_r$, then this facet is a quadrangle, and $F_l\ne F_s$. If $F_l\cap F_s\ne\varnothing$, then $F_i\cap F_l\cap F_s$
is a vertex, for otherwise $(F_i,F_l,F_s)$ is a $3$-belt with at most one edge in $M$. Similarly, $F_j\cap F_l\cap F_s$ is a vertex.
Then $F_l$ and $F_s$ are quadrangles, and $P=\Delta^2\times I$. A contradiction. Thus, 
$F_k\ne F_r$. Similarly, $F_l\ne F_s$. Then $(F_i,F_l,F_j,F_r)$ is a $4$-belt with no edges in $M$.  
A contradiction. Thus, we have considered all possible cases and the theorem is proved.
\end{proof}
\subsection{Hyperbolic links $C_\tau$}
\begin{definition}
We call a link $C$ in a $3$-manifold {\it hyperbolic} if its complement has a complete hyperbolic structure of finite volume.
\end{definition}

\begin{theorem}\label{thHL}
Let $\Lambda$ be a linearly independent vector-coloring $\Lambda$ of rank $r$ of a simple $3$-polytope $P$ 
and $\tau\in\mathbb Z_2^r\setminus\{0\}$ be an involution such that $N(P,\Lambda)/\langle\tau\rangle$
is a closed topological manifold and $M_\tau\ne\varnothing$. Then the following conditions are equivalent.
\begin{enumerate}
\item $P$ is flag and any its $4$-belt has at least one edge in $M_\tau$.
\item  $G(P)/M_\tau$ is a graph of a polytope in $\mathcal{R}$.
\item The link $C_\tau$ is hyperbolic.
\end{enumerate}
\end{theorem}
\begin{proof}
The implication (1)$\Rightarrow$ (2) follows from Theorem~\ref{thMH}.

\begin{lemma}\label{Lcomp}
If $G(P)/M_\tau$ is a graph of a polytope $Q\in \mathcal{R}$, then the complement 
$N(P,\Lambda_\tau)\setminus C_\tau$ is homeomorphic to the hyperbolic manifold $\mathbb L^3/\varphi_{\Lambda_\tau}$
defined for the induced vector-coloring $\Lambda_\tau$ of $Q$ by Construction~\ref{VMC}.
\end{lemma}
\begin{proof}
The proof follows from Remark~\ref{Rcusp} and the fact that the 
manifold $N(\widehat Q,\widehat{\Lambda_\tau})$ is homeomorphic to the manifold 
$N(P,\Lambda_\tau)$ without tubular neighbourhoods of circles corresponding to the edges in $M_\tau$. 
\end{proof}
Thus, (2)$\Rightarrow$ (3). 

If (2) holds, then by Theorem~\ref{thMH} we have $P\ne\Delta^3$ and any $4$-belt of $P$
has at least one edge in $M_\tau$. Also $P$ has no $3$-belts. For otherwise, if a $3$-belt $(F_i,F_j,F_k)$ has two edges in 
$M_\tau$, say $F_i\cap F_j$ and $F_j\cap F_k$,
then $\tau=\Lambda_i+\Lambda_j=\Lambda_j+\Lambda_k$ and $\Lambda_i=\Lambda_k$, which is a contradiction.
Thus, $P$ is flag. Hence, (2)$\Rightarrow$ (1).

Now assume that (3) holds, but (1) does not hold. If $P=\Delta^3$, then either the 
vectors $\Lambda_1$, $\Lambda_2$, $\Lambda_3$, $\Lambda_4$ are linearly independent, or $\Lambda_1$, 
$\Lambda_2$, $\Lambda_3$ are linearly independent, and $\Lambda_4=\Lambda_1+\Lambda_2+\Lambda_3$. In both cases 
without loss of generality we may assume that $\tau=\Lambda_1+\Lambda_2$. In both cases $N(P,\Lambda_\tau)\simeq S^3$.
In the first case $N(P,\Lambda)\simeq S^3$, and $C_\tau$ is a trivial circle.
We have $N(P,\Lambda_\tau)\setminus C_\tau\simeq D^2\times S^1$ and 
this manifold has no complete hyperbolic structure of finite volume (since the boundary torus is not incompressible, see 
\cite[Proposition D.3.18(1)]{BP92}).
In the second case $N(P,\Lambda)\simeq \mathbb RP^3$,  $C_\tau$ is the Hopf 
link consisting of two trivial circles linked together exactly once. 
We have $N(P,\Lambda_\tau)\setminus C_\tau\simeq T^2\times I$ and this manifold
is known to have no complete hyperbolic structure of finite volume (see \cite[p. 359]{T82}). Thus, $P\ne\Delta^3$.

Denote $M=M_\tau$ for short. Consider the polytope $P_M$ obtained from $P$ by cutting off all the edges of $M$. 
The space $N(P,\Lambda_\tau)\setminus C_\tau$ is homeomorphic to the interior of $N(P_M,\Lambda_M)$, 
where $\Lambda_M$ is a vector coloring of rank $(r-1)$ (as $\Lambda_\tau$)
such that $\Lambda_M(F_i)=\Lambda_\tau(F_i)$ for "old" facets $F_i$, and $\Lambda_M(G_i)=0$ for  quadrangles $G_i$ 
corresponding to edges in $M$. Denote 
the set of these quadrangles $S$.
Each quadrangle in $M$ corresponds to a family of tori in the real moment-angle manifold
$\mathbb R\mathcal{Z}_{P_M}$. If we delete from $\mathbb R\mathcal{Z}_{P_M}$ all the tori corresponding to these 
quadrangles, we obtain a disjoint union of spaces homeomorphic to the interior of the manifold $N(P_M,O_M)$, 
defined by the vector-coloring $O_M$ of rank $m$ (the number of facets in $P$) that sends ``old'' facets $F_i$
of $P_M$ to the basis vectors $e_i\in\mathbb Z_2^m$ and quadrangles from $S$ to $0$. 
Moreover, $N(P_M,O_M)=\mathbb R\mathcal{Z}_{P_M}/H$, 
where the subgroup $H$ is generated by vectors corresponding to the quadrangles in  $S$.
Then $N(P_M,O_M)\to N(P_M,\Lambda_M)$ is a finite-sheeted covering. Indeed, $N(P_M,\Lambda_M)$
is an orbit space of the action of the finite subgroup $H$, which is the kernel of
the mapping $\psi\colon\mathbb Z_2^m\to \mathbb Z_2^{r-1}$ defined by $\Lambda_\tau$, on $N(P_M,O_M)$.
This action is free, since the stabilizer of a point $[x,a]\in N(P_M,O_M)$ is $\langle O_M(F_i)\colon x\in F_i\rangle$,
and  ${\rm Ker}\,\psi\cap \langle O_M(F_i)\colon x\in F_i\rangle=\{0\}$.  
Thus, if ${\rm int}\,N(P_M,\Lambda_M)$ is hyperbolic, then ${\rm int}\,N(P_M,O_M)$ is an orientable hyperbolic manifold.

If $P$ has a $3$-belt $\mathcal{B}=(F_i,F_j,F_k)$, 
then by the above argument either this belt has no edges in $M$, or it has one edge in $M$, say $F_i\cap F_j$. Consider a triangle $T$
with vertices in the midpoints of the edges $F_i\cap F_j$, $F_j\cap F_k$, and $F_k\cap F_i$, and straight 
edges inside the facets $F_i$, $F_j$ and $F_k$. 

In the first case the preimage of $T$ in $N(P_M,O_M)$
consists of $2^{m-3}\geqslant 4$ spheres with trivial tubular neighbourhoods, where each sphere is glued of $8$ copies of $T$ (see 
more details in \cite[Proposition 3.6]{E22a}). 
Cutting along all these spheres divides $N(P_M,O_M)$ into several connected 
components. Each component is the copy of a connected manifold $N(P_M',O_M')$ or $N(P_M'',O_M'')$, where $P_M$ is 
divided into polytopes $P_M'$ and $P_M''$ if we cut along the triangle
$T$ (see \cite[Corollary 2.27]{E22a}). $O_M'$ and $O_M''$ are induced vector-colorings with $O_M'(T)=O_M''(T)=0$.
But the deletion of each single sphere leaves the manifold $N(P_M,O_M)$ connected, since each polytope  $P_M'$ and $P_M''$
has at least four facets left from $P$, and hence each manifold $N(P_M',O_M')$ and $N(P_M'',O_M'')$ has at least two 
boundary spheres corresponding to $T$. In particular, any of the spheres does not bound a $3$-disk in $N(P_M,O_M)$, 
and is essential.
Then $N(P_M,O_M)$ is not hyperbolic (see \cite[Theorem 2.9]{B02}). A contradiction.

In the second case the preimage of $T$ (with a vertex cut) in $N(P_M,O_M)$
consists of $2^{m-3}\geqslant 4$ surfaces. Each surface is glued of $8$ copies of $T$ (with a vertex cut), 
and is homeomorphic to a sphere with two holes corresponding to the cut vertex. The holes correspond to
circles on two different boundary tori of $N(P_M,O_M)$, and each circle is a meridian or a parallel of the torus.
Then in $\pi_1(N(P_M,O_M))$ these two circles represent two nontrivial conjugate elements from $\pi_1(T_1)$ and $\pi_1(T_2)$ 
(remind that boundary components are $\pi_1$-injective, see 
\cite[Proposition D.3.18(1)]{BP92}). 
On the other hand, it is known that for different boundary tori
$T_1$ and $T_2$ of a complete hyperbolic $3$-manifold of finite volume and any $g\in \pi_1(N(P_M,O_M))$ we have
$g^{-1}\pi_1(T_1)g\cap \pi_1(T_2)=\{1\}$ (see \cite[Lemma 3.4(2)]{F11}. A contradiction. Thus, $P$ is flag.

Since (1) does not hold, $P$ has a  least one $4$-belt $\mathcal{B}=(F_i,F_j,F_k, F_l)$ 
with no edges in $M$.  Consider a ``quadrangle'' $T$, which is the cone
with apex at the barycenter of $P$ over the closed curve consisting of straight edges 
in facets $F_i$, $F_j$, $F_k$, and  $F_l$ connecting the midpoints of the edges $F_i\cap F_j$, $F_j\cap F_k$, $F_k\cap F_l$, 
and $F_l\cap F_i$. 
The preimage of $T$ in $N(P_M,O_M)$ consists of $2^{m-4}\geqslant 2$ incompressible tori  
(see \cite[Proposition 4.23]{E22a}) disjoint from the boundary 
such that the deletion of all the tori divides $N(P_M,O_M)$ into several connected components. 
Each component is the copy of a connected manifold
$N(P_M',O_M')$ or $N(P_M'',O_M'')$, where $P_M$ is divided into polytopes $P_M'$ and $P_M''$ if we cut along the quadrangle 
$T$ (see \cite[Corollary 2.27]{E22a}). $O_M'$ and $O_M''$ are induced vector-colorings with $O_M'(T)=O_M''(T)=0$.
But the deletion of each single torus leaves the manifold $N(P_M,O_M)$ connected, since each polytope  $P_M'$ and $P_M''$
has at least five facets left from $P$,  and hence each manifold $N(P_M',O_M')$ and $N(P_M'',O_M'')$ has at least two 
boundary tori corresponding to $T$. But in a complete hyperbolic $3$-manifold of finite volume
the deletion of each incompressible tori disjoint from the boundary divides the manifold into two connected components, 
where one of this components is $T^2\times I$ with the second boundary torus being the boundary torus (cusp) 
of the manifold (see \cite[before Corollary 1.8]{H3MT}, \cite[Theorem 2.3 and before]{T82}, \cite[Section 1.6]{AFW15}). 
A contradiction. 

Thus, (3) $\Rightarrow$ (1), and the theorem is proved.
\end{proof}

\section{Parametrization of the set of hyperbolic links $C_\Gamma$}
Theorem~\ref{thHL} implies the following result.
\begin{theorem}\label{thGHL}
For a link $C_\Gamma$ corresponding to a Hamiltonian cycle, theta- or $K_4$-subgraph $\Gamma$
in a simple $3$-polytope $P$ the following conditions are equivalent.
\begin{enumerate}
\item $P$ is flag and for any $4$-belt of $P$ at least two successive facets 
belong to the same connected component of $\partial P\setminus\Gamma$.
\item  $G(P)/M_\Gamma$ is a graph of a right-angled hyperbolic $3$-polytope of finite volume.
\item The link $C_\Gamma$ is hyperbolic.
\end{enumerate}
\end{theorem}
\begin{corollary}
For any Hamiltonian subgraph $\Gamma$ in a Pogorelov polytope $P$ the link $C_\Gamma$ is hyperbolic.
\end{corollary}
\begin{definition}
If $G(P)/M_\Gamma$ is a graph of a $3$-polytope, we will denote this polytope $Q_\Gamma$
\end{definition}
For a hyperbolic link $C_\Gamma$ its complement it glued of $2^k$ copies of the right-angled polytope $Q_\Gamma$,
where $k=2$ for the Hamiltonian cycle, $k=3$ for the theta-subgraph, and $k=4$ for the $K_4$-subgraph.
\begin{example}\label{ex:BorAs}
The theta-subgraph in the cube $I^3$
 from Example~\ref{exthetaC} satisfies conditions of Theorem~\ref{thGHL}. The polytope $Q_\Gamma$ is 
a right-angled $3$-gonal bipyramid with $2$ proper and $3$ ideal vertices.  
The complement of the Borromean rings $C_{\Gamma}$
is glued of $8$ copies of $Q_\Gamma$. 
 As it was mentioned above this representation of the complement $S^3\setminus C_{\Gamma}$ 
can be extracted from \cite[Example 13.1.5]{T02}. Also this representation is equivalent 
to one from \cite[Examples 7 and 9]{MR22} and \cite[Section 1.1.2]{M24}. Recently in \cite{VE25} it was proved that $Q_\Gamma$
has the smallest volume among all hyperbolic right-angled $3$-polytopes of finite volume.
The almost Pogorelov polytope associated to the $3$-gonal bipyramid is the $3$-dimensional
associahedron (Stasheff polytope) $As^3$. As is it mentioned in \cite[Remark after Example 4.9.4]{BP15} the 
$12$-dimensional moment-angle manifold $\mathcal{Z}_{As^3}$ has a nontrivial triple Massey product 
(it follows also from \cite[Theorem 6.1.1]{DS07}). In \cite{L19}, nontrivial triple Massey products were constructed in cohomology
of moment-angle manifolds of more general graph-associahedra. 
\end{example}

Now let us describe the family of hyperbolic links $C_\Gamma$ in terms of
right-angled hyperbolic $3$-polytopes of finite volume $Q_\Gamma$.  On the polytope $Q_\Gamma$ the subgraph $\Gamma$ induces the following structure.
\begin{definition}
A cycle in a graph $G$ is called {\it Eulerian} if it passes each edge of $G$ once (it may pass one vertex many times).

An {\it Eulerian theta-subgraph} in a graph $G$ consists of three paths connecting two different vertices.  Each edge
of $G$ belongs to exactly one path and is traversed exactly once.

An {\it Eulerian $K_4$-subgraph} in a graph $G$ consists of $4$ different vertices and $6$ paths connecting these vertices. 
Each pair of vertices is connected by a path. Each edge of $G$ belongs to exactly one path and is traversed exactly once.

For short we will call by an {\it  Eulerian subgraph} an Eulerian cycle, an  Eulerian theta-subgraph, or an
Eulerian $K_4$-subgraph and denote it $\gamma$.

An Eulerian subgraph in a plane graph is {\it nonselfcrossing} if each path traverses any its interior vertex 
(that is, different from its ends) by edges successive in the cyclic order around this vertex (it may visit a vertex more than once).
\end{definition}
The following result is straightforward from the definition.
\begin{proposition}\label{PEg}
Let $\Gamma$ is a Hamiltonian subgraph in a simple $3$-polytope $P$. Then it induces a nonselfcrossing
Eulerian subgraph $\gamma_\Gamma$ in $G(P)/M_{\Gamma}$.
\end{proposition}
\begin{remark}
The transition from a Hamiltonian cycle $\Gamma$ in a cubic graph $G$ to the nonselfcrossing Eulerian cycle
in the graph $G/M_{\Gamma}$ was used in \cite{BP17} to give a new characterisation of
cubic Hamiltonian graphs having a~perfect matching. 
\end{remark}

\begin{definition}
Let us call a connected plane graph $G$ {\it admissible}, if it has $0$, $2$ or $4$ vertices of valency $3$ and all 
the other vertices of valency $4$.
\end{definition}
The vertices of an Eulerian subgraph $\gamma$ in an admissible plane graph $G$ are exactly $3$-valent vertices of $G$.
The plane graph $G(P)/M_{\Gamma}$ corresponding to a Hamiltonian subgraph in a simple $3$-polytope $P$
is admissible. 
\begin{question}
To describe the class of admissible plane graphs arising as $G(P)/M_{\Gamma}$ 
for Hamiltonian subgraphs in simple $3$-polytopes.
\end{question}
We give a partial answer to this question for Hamiltonian cycles in Corollary \ref{corch4}.  

The graph $G(P)/M_{\Gamma}$ may not be a graph of a $3$-polytope. For example, if $\Gamma$ is a Hamiltonian cycle and 
$P$ contains a triangle, then one of the faces of $G(P)/M_{\Gamma}$ is a bigon. For a combinatorially unique Hamiltonian cycle
$\Gamma$ in  the cube $I^3$ two facets of $G(P)/M_{\Gamma}$ are bigons. 

\begin{question}
To characterise Hamiltonian subgraphs $ \Gamma$ in simple $3$-polytopes such that 
$G(P)/M_{\Gamma}$ is a graph of a $3$-polytope. 
\end{question}

\begin{construction}\label{ADC}
Any nonselfcrossing Eulerian subgraph $\gamma$ in an admissible plane graph $G$ 
produces a plane cubic graph $G_\gamma$ with a Hamiltonian cycle $\Gamma_\gamma$ such that 
$G=G_\gamma/M_{\Gamma_\gamma}$. For this substitute each $4$-valent vertex of $G$ by 
two vertices connected by an edge in such a way that each pair of successive edges of $\gamma$ at this vertex 
is incident to the same vertex of the new edge. If $G_\gamma$ is a graph of simple $3$-polytope, we will denote this polytope $P_\gamma$.
\end{construction}
\begin{remark}
For a nonselfcrossing Eulerian cycle in a $4$-valent plane graph this 
construction is a particular case of a construction of the graph $A(E)$ used in the proof of \cite[Theorem 14]{K68b}. 
\end{remark}

\begin{proposition}\label{PGg}
Let $\gamma$ be a nonselfcrossing Eulerian subgraph $\gamma$ in an admissible plane graph $G$,
where $G=G(Q)$ for a $3$-polytope $Q$. Then $G_\gamma$ is a graph of a simple $3$-polytope.
\end{proposition}
\begin{proof}
Each face of $G$ is bounded by a simple edge cycle and if two such boundary cycles intersect, then
at a vertex or an edge. Then in $G_\gamma$ any face is also bounded by a simple edge-cycle. If two such cycles
intersect, then by a finite set of disjoint edges, since $G_\gamma$ is cubic.
After shrinking these edges correspond to disjoint vertices and edges. Therefore, there is only one edge. Thus, by
the Steinitz theorem $G_\gamma$ is a graph of a simple $3$-polytope.
\end{proof}
\begin{definition}
For short we will denote by $C_\gamma$ 
the link $C_{\Gamma_\gamma}$ corresponding to a nonselfcrossing Eulerian subgraph $\gamma$
in a simple $3$-polytope $Q$.
\end{definition}

\begin{theorem}\label{thHLP}
Hyperbolic links $C_\Gamma$ bijectively correspond to nonselfcrossing 
Eulerian subgraphs in right-angled hyperbolic $3$-polytopes
of finite volume with $0$, $2$, or $4$ finite vertices and all the other vertices lying at infinity.
\end{theorem}
\begin{proof}
This follows directly from Theorem~\ref{thGHL} and Propositions~\ref{PEg} and~\ref{PGg}.
\end{proof}
\section{Links $C_\Gamma$ corresponding to Hamiltonian cycles}
\subsection{Existence of nonselfcrossing Eulerian cycles}
In the paper \cite{K68b} on Eulerian cycles in $4$-valent graphs a nonselfcrossing Eulerian cycle is called a {\it $\sigma$-line}.
In the paper \cite{FSW92} devoted to transformations of Eulerian trails and in the paper \cite{BFFS18} devoted to Barnette's conjecture that {\it every $3$-connected cubic planar bipartite graph is Hamiltonian},
a nonselfcrossing Eulerian cycle is called an {\it $A$-trail} ($A$ means {\it admissible}). 
As it was mentioned by D.V.~Talalaev nonselfcrossing Eulerian cycles arise in the theory of electrical 
networks \cite{BKT26}.
The following result is proved in \cite[Theorem 1]{K68a}, \cite[Theorem 10]{K68b}, and \cite[Theorem 1]{B83}. 
\begin{proposition}
Any plane $4$-valent graph has a nonselfcrossing Eulerian cycle.
\end{proposition}
The idea of the proof is very simple: first take any Eulerian cycle, and then modify it at each vertex of transversal selfintersection.
\begin{question}
Does every right-angled hyperbolic $3$-polytope
of finite volume with $2$ (or $4$) finite vertices and all the other vertices lying at infinity have 
a nonselfcrossing Eulerian theta-subgraph (or $K_4$-subgraph)?
\end{question}
\subsection{Medial graphs and ideal right-angled polytopes} 
Let $G$  be a plane graph. 
Its {\it medial graph}\index{medial graph} is a new plane graph $M(G)$ with vertices bijectively corresponding to edges of $G$. 
Its edges arise when we walk around the boundary cycle of each face. Each vertex of this cycle corresponds
to an edge of $M(G)$ connecting the vertices corresponding to successive edges of the cycle. A medial graph is a plane $4$-valent
graph. It is known that any such a graph $\widetilde{G}$ is a medial graph of some plane graph $G$. 
(First, one can prove that the faces of $\widetilde{G}$ have a checkerboard coloring: a coloring in black and white colors
such that faces that have a common edge have different colors. Then vertices of $G$ correspond to black (or white) faces
and  each vertex of $\widetilde{G}$ corresponds to an edge connecting the vertices of $G$ corresponding to faces incident to this vertex.)

It is known that a graph $\widetilde{G}$ is a graph $G(\widetilde{P})$ of an ideal hyperbolic right-angled $3$-polytope  
$\widetilde{P}$ if and only if 
it is a medial graph of some polytope $P$ (not necessarily simple). Moreover, $P$ 
is defined uniquely up to passing to the dual polytope $P^*$ (see more details in \cite[Section 9]{E19}). Given a $4$-valent 
plane graph $\widetilde{G}$ one can determine whether it is a graph of an ideal right-angled $3$-polytope as follows. 
First build a graph $G$ such that $\widetilde{G}=M(G)$ as described above. Then check that $G$ is  
simple, any its face is bounded by a simple edge-cycle, and if two such boundary cycles  intersect, then their intersection is a 
vertex or and edge (see the Steinitz theorem~\ref{StT}).
\begin{remark}
According to \cite[Theorem 14]{K68b}, \cite{L78}, \cite[Corollary 3.5]{B87}, 
nonselfcrossing Eulerian cycles in the medial graph $M(G)$ are in bijection with spanning
trees $T$ of $G$. Namely, we add an edge $E$ of $G$ to $T$ if and only if in the corresponding vertex 
of $M(G)$ the Eulerian cycle both times traverses the pairs of edges corresponding to a face of $G$ containing $E$.
\end{remark}
\begin{remark}
It is easy to see that nonselfcrossing Eulerian cycles in a $4$-valent plane graph $G$ correspond to Hamiltonian cycles in $M(G)$.
\end{remark}

\subsection{Complement to the hyperbolic link $C_\Gamma$ corresponding to a Hamiltonian cycle}
\begin{construction}[A manifold from a checkerboard coloring]
Each ideal right-angled $3$-polytope $P$ admits a checkerboard coloring: its faces can be colored in 
black and white colors in such a way that adjacent faces have different colors (if $G(P)=M(G(Q))$, then 
black faces of $P$ correspond to vertices of $Q$ and white facets of $P$ correspond to facets of $Q$). 
Assign to white color the vector $e_1\in\mathbb Z_2^2$
and to black color $e_2\in \mathbb Z_2^2$. Then we obtain the mapping 
$\Lambda_P\colon \{F_1,\dots, F_m\}\to\mathbb Z_2^2$, $F_i\to \Lambda_i$,  from the set of facets of $P$ to $\mathbb Z_2^2$, 
and the Vesnin-Mednykh Construction \ref{VMC} gives the 
complete hyperbolic manifold $N(P)$ of finite volume glued of $4$ copies of $P$:
$$
N(P)=P\times \mathbb Z_2^2/\sim, (p,a)\sim (q,b) \text{ if and only if } p=q\text{ and }a-b\in\langle \Lambda_i\colon p\in F_i\rangle. 
$$  
In this formula the ideal vertices are not assumed to belong to $P$. 
\end{construction} It was proved in \cite{E22b} that the family of manifolds 
$\{M(P)\}$, where $M(P)$ is the double of the manifold obtained from $N(P)$ by adding the boundary torus at each cusp, 
is cohomologically rigid over $\mathbb Z_2$, that is two manifolds from this family are homeomorphic if and only if their 
cohomology rings over $\mathbb Z_2$ are isomorphic as graded rings.

\begin{proposition}
Let $\gamma$ be a nonselfcrossing Eulerian cycle in an ideal right-angled $3$-polytope $Q$.
Then $S^3\setminus C_\Gamma$ is homeomorphic to the manifold $N(Q)$.
\end{proposition}
\begin{proof}
The proof follows from Lemma~\ref{Lcomp}.
\end{proof}
\begin{remark}
In \cite{CKP22}, a hyperbolic link $L$ is called {\it right-angled}, if $S^3\setminus L$ with
the complete hyperbolic structure admits a decomposition into ideal hyperbolic right-angled
polytopes. By construction the hyperbolic link $C_\Gamma$ corresponding to a Hamiltonian cycle is right-angled. 
\end{remark}

\begin{example}\label{ex:oct}
The octahedron is a unique right-angled polytope with the smallest number of vertices (equal to $6$). Up to combinatorial 
symmetries it has exactly two nonselfcrossing Eulerian cycles (see the proof in Fig.~\ref{A3cycles}) shown in Fig.~\ref{Octfig}. 
We also present the corresponding simple polytopes
and hyperbolic links. These links are not isotopic, but their complements are homeomorphic. 
\begin{figure}[h]
\begin{center}
\includegraphics[width=\textwidth]{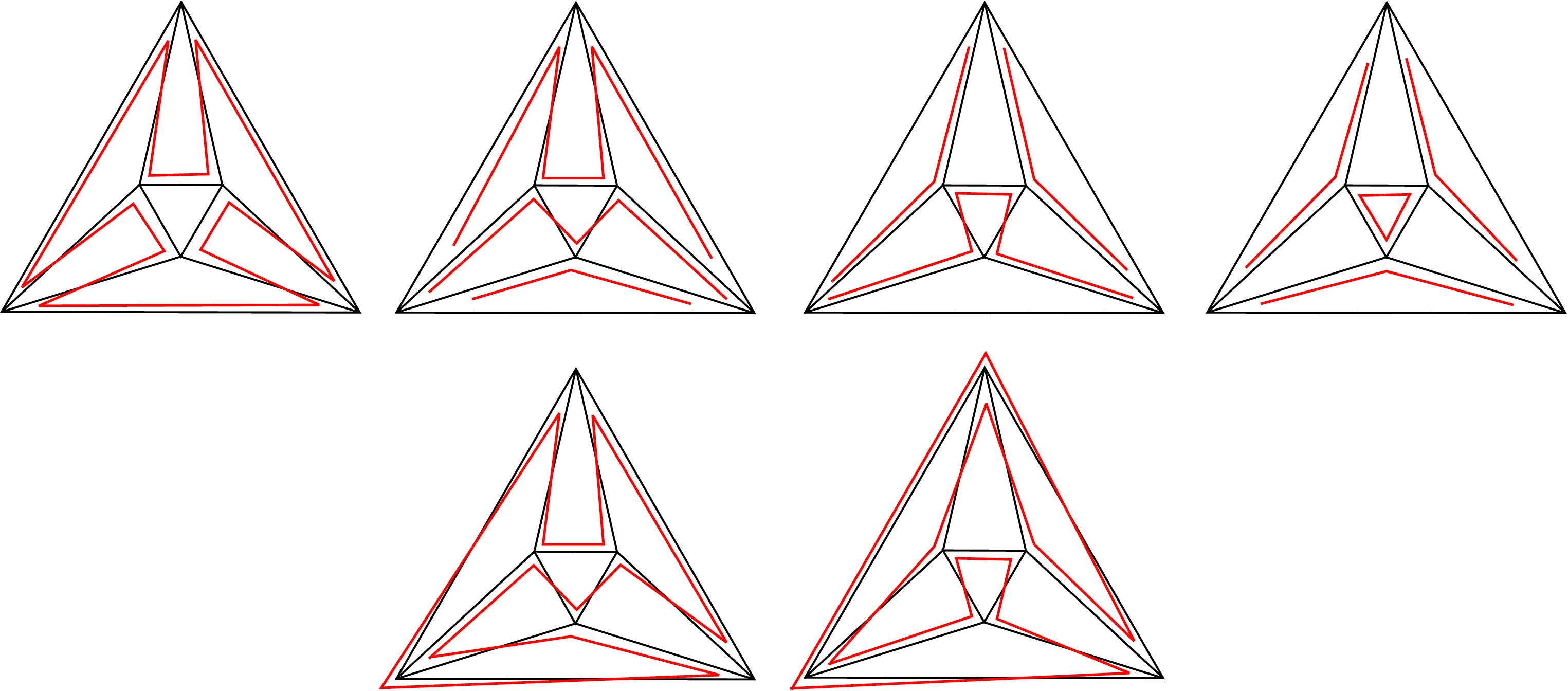}
\end{center}
\caption{Enumeration of nonselfcrossing Eulerian cycles in the octahedron}\label{A3cycles}
\end{figure}

\begin{figure}[h]
\begin{center}
\includegraphics[width=\textwidth]{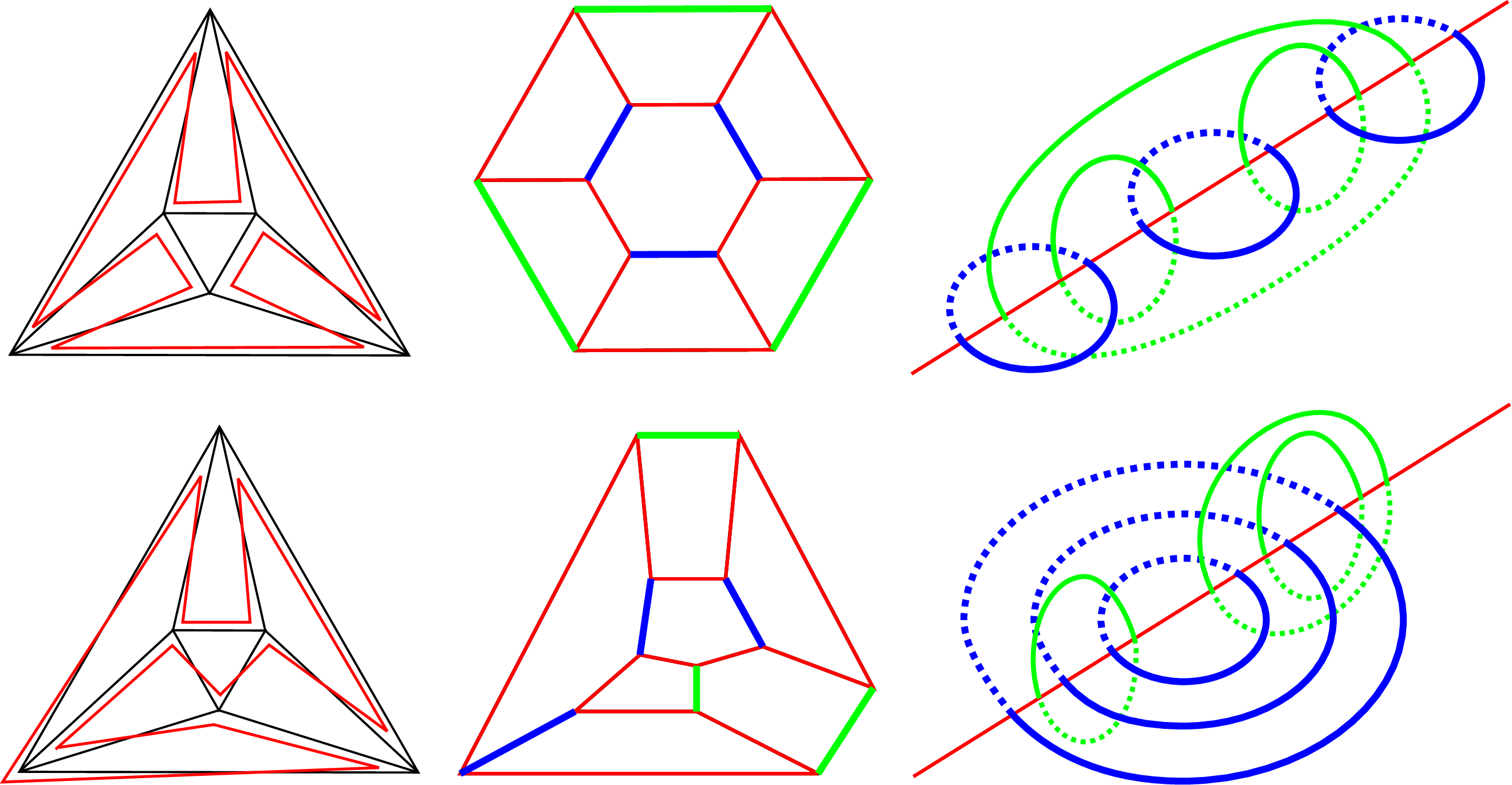}
\end{center}
\caption{Hyperbolic links corresponding to nonselfcrossing Eulerian cycles in the octahedron}\label{Octfig}
\end{figure}
\end{example}

\begin{example}\label{ex:ak}
Example~\ref{ex:oct} 
can be generalized as follows. It is known that any {\it antiprism} $A(k)$ ($A(3)$ is the octahedron) is an
ideal  right-angled $3$-polytope (see \cite{V17}). In Fig~\ref{k-anti}, we show two different nonselfcrossing Eulerian cycles on
this polytope. The hyperbolic structure on the complement to the link corresponding to the left cycle is exactly the structure defined
by W.P.~Thurston in the complement to the $(2k)$-link chain \cite[Example 6.8.7]{T02}. 
The case of $A(3)$ is also mentioned in \cite[Section 5.1]{V17}. 

\begin{figure}[h]
\begin{center}
\includegraphics[width=0.5\textwidth]{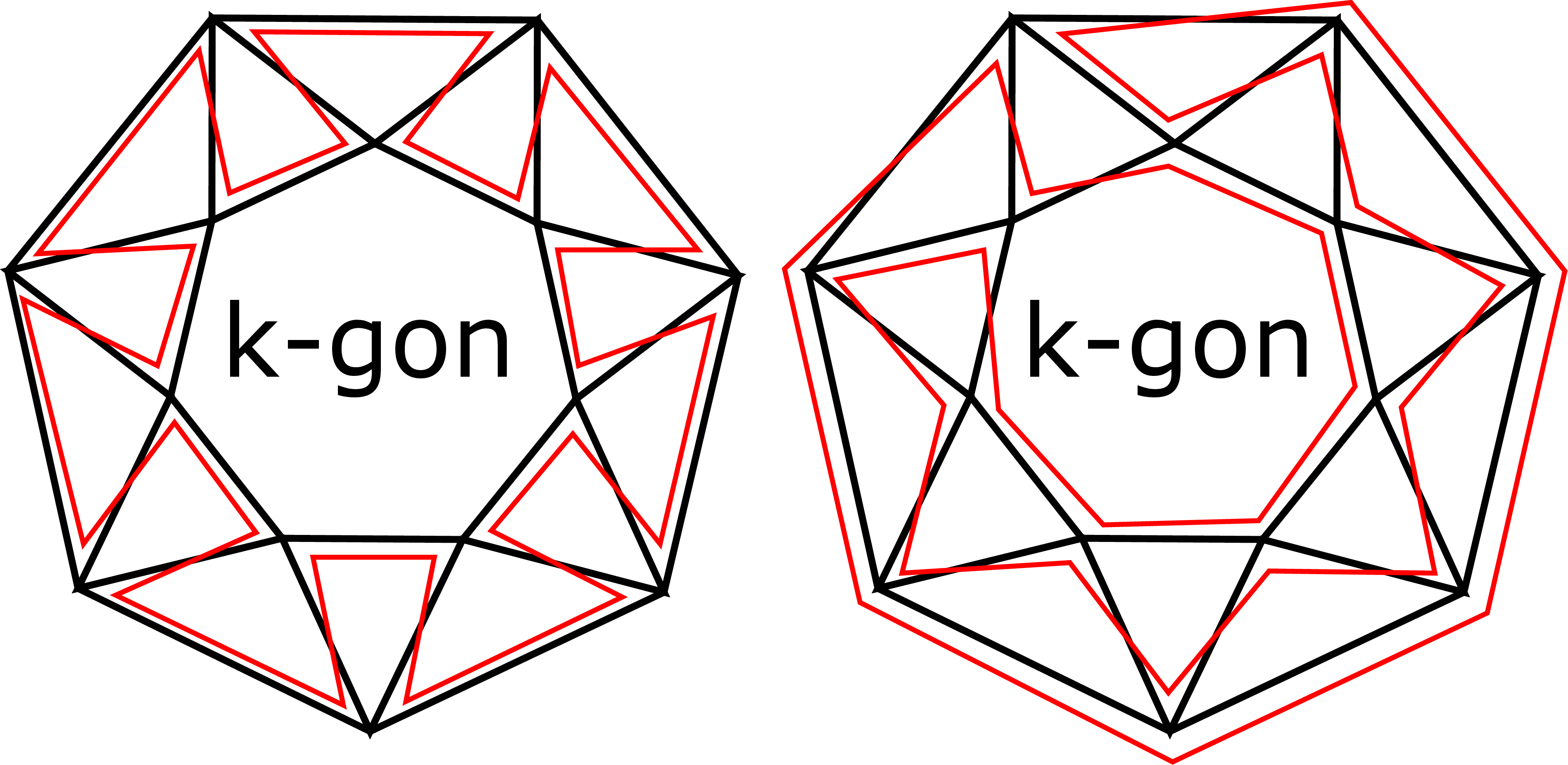}
\end{center}
\caption{Nonselfcrossing Eulerian cycles in the antiprism}\label{k-anti}
\end{figure}
\end{example}

\begin{example}\label{A4ex}
It can be shown (see Fig.~\ref{A4cycles}) that  up to combinatorial symmetries $A(4)$ has exactly $7$
nonselfcrossing Eulerian cycles  shown in Fig.~\ref{A4polytopes}.
\begin{figure}[h]
\begin{center}
\includegraphics[width=\textwidth]{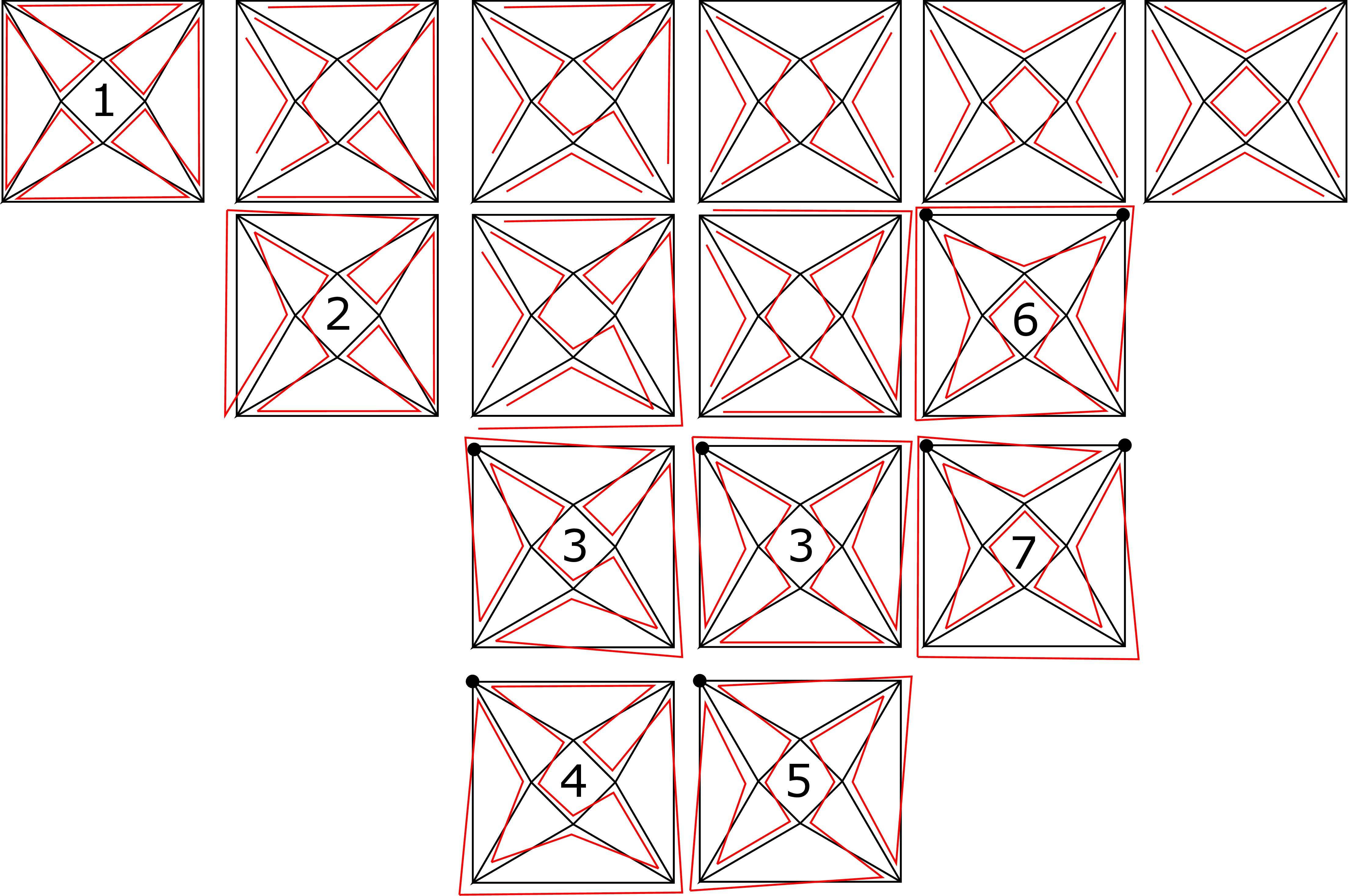}
\end{center}
\caption{Enumeration of nonselfcrossing Eulerian cycles in $A(4)$}\label{A4cycles}
\end{figure}

\begin{figure}[h]
\begin{center}
\includegraphics[width=\textwidth]{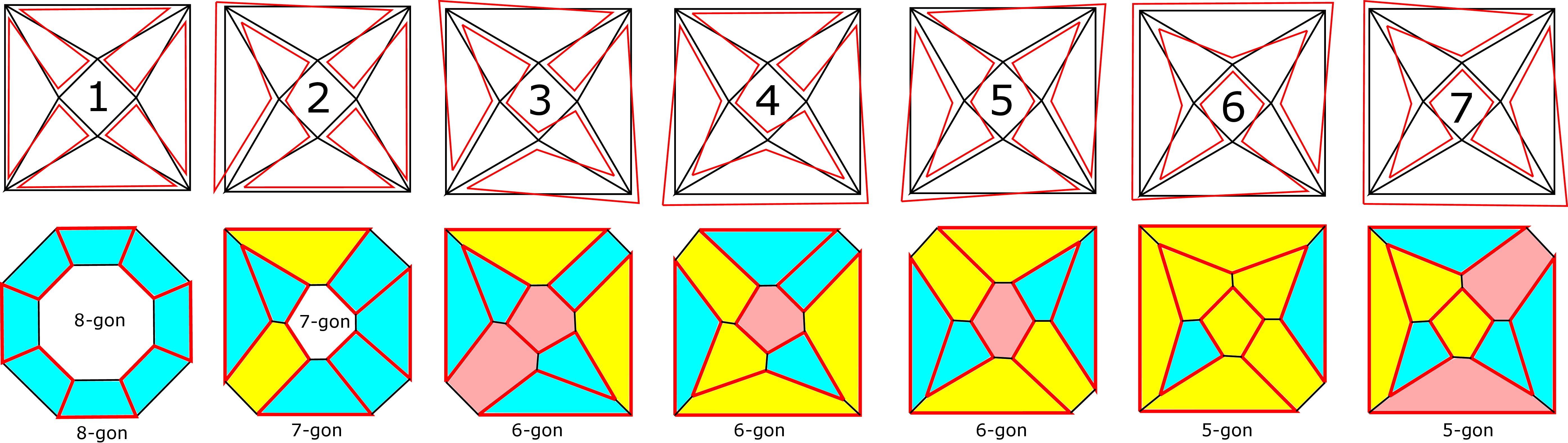}
\end{center}
\caption{Nonselfcrossing Eulerian cycles on $A(4)$ and the corresponding polytopes}\label{A4polytopes}
\end{figure}
\end{example}

\subsection{Edge-twists and construction of nonselfcrossing Eulerian cycles}

In \cite[Theorem 2.14]{V17}, on the base of results from  \cite{BGGMTW05} the following construction of all the ideal right-angled polytopes was described.

\begin{definition}
An operation of an {\it edge-twist}\index{edge-twist} is shown in Fig. \ref{e-skr}. Two edges on the left belong to one facet of an
ideal right-angled polytope and connect $4$ distinct vertices. The result is again an ideal right-angled polytope. 
Let us call an edge-twist {\it restricted} if both edges are adjacent to the same edge, that is the $4$ vertices follow each other during the round walk along the boundary of a facet.
\end{definition}
\begin{figure}
\begin{center}
\includegraphics[width=.5\textwidth]{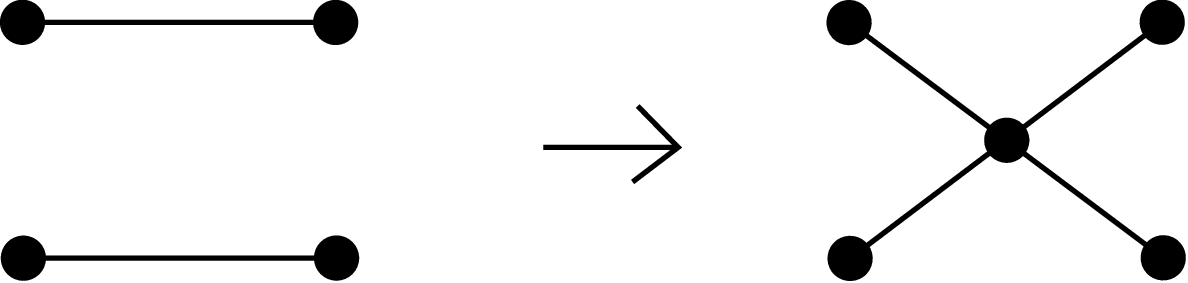}
\caption{An operation of an edge-twist}
\label{e-skr}
\end{center}
\end{figure}

\begin{theorem}[\cite{V17}]\label{V-th}
Any ideal right-angled $3$-polytope can be obtained by operations of an edge-twist from some 
$k$-antiprism $A(k)$, $k\geqslant 3$.
\end{theorem}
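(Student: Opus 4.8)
The plan is to reduce the statement to a known generation theorem for quadrangulations of the sphere by passing to the dual polytope. Since $P$ is a $4$-valent $3$-polytope, its dual $P^*$ is a \emph{quadrangulation}: a $3$-polytope all of whose $2$-faces are quadrilaterals (the degree of a vertex of $P$ equals the length of the corresponding face of $P^*$). The combinatorial characterization of ideal right-angled polytopes furnished by Andreev's theorem --- namely that $G(P)$ is a $3$-connected $4$-valent planar graph with no prismatic $3$- or $4$-circuits --- dualizes to the condition that $P^*$ is a \emph{simple} quadrangulation of the sphere in the sense of \cite{BGGMTW05}, i.e.\ a $3$-connected quadrangulation whose only separating $4$-cycles bound faces. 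First I would make this dictionary precise, checking that prismatic $3$- and $4$-circuits of $P$ correspond exactly to the short nonfacial separating cycles excluded in the definition of a simple quadrangulation.

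Next I would translate the edge-twist of Fig.~\ref{e-skr} into the corresponding local operation on $P^*$. Two edges of a common face of $P$ meeting $4$ distinct vertices become, in the dual, a configuration inside a vertex-star of $P^*$, and the edge-twist becomes one of the face-expansion/vertex-insertion operations used in \cite{BGGMTW05} to generate simple quadrangulations. The two tasks here are (a) to verify that each edge-twist sends an ideal right-angled polytope to an ideal right-angled polytope --- equivalently that the dual operation preserves simplicity of the quadrangulation, creating no new nonfacial separating $4$-cycle --- and (b) to check that the operations of \cite{BGGMTW05} are, after dualizing, \emph{exactly} realized by edge-twists, and not by some larger repertoire of moves. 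Step (a) is the closure statement already asserted in the Definition, so the essential content is the matching of generating sets in step (b).

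Finally I would identify the base objects. The generation theorem of \cite{BGGMTW05} produces every simple quadrangulation from the family of \emph{pseudo-double wheels} by the face-expansion operations, and the pseudo-double wheel on $2k$ rim vertices is precisely the dual of the antiprism $A(k)$; feeding this identification back through the duality yields exactly the assertion of Theorem~\ref{V-th}. The main obstacle I anticipate is step (b): the scheme of \cite{BGGMTW05} is phrased with operations that add one or two vertices at designated places, and one must show that each such operation, read on the $4$-valent side, is an edge-twist as defined here --- in particular that the four incident vertices are always distinct and lie on a common face --- and conversely that no edge-twist falls outside the scheme. A cleaner but more laborious alternative avoids duality altogether: argue by induction on the number of vertices of $P$, showing that any non-antiprism ideal right-angled polytope admits an \emph{inverse} edge-twist to a smaller polytope in the class, with the antiprisms $A(k)$ as the irreducible base cases; there the difficulty migrates to proving that a reducing inverse move always exists without creating a prismatic $3$- or $4$-circuit.
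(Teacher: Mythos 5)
Your proposal follows the same route as the actual derivation: the paper itself gives no proof of Theorem~\ref{V-th}, importing it from \cite[Theorem 2.14]{V17}, where it is obtained exactly as you describe --- by dualizing to simple quadrangulations of the sphere and invoking the generation theorem of \cite{BGGMTW05}, whose base objects, the pseudo-double wheels, are the duals of the antiprisms $A(k)$, and whose expansion operations dualize to edge-twists. The difficulty you flag in step (b), matching the generating operations against edge-twists, is precisely the content of Vesnin's argument, and your inductive alternative via inverse moves is essentially the refinement carried out in \cite[Theorem 9.13]{E19} (Theorem~\ref{I-th} here), which sharpens the generation to restricted edge-twists starting from $A(4)$.
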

\begin{remark}
Operations of an edge-twist are not applicable to the octahedron, hence all the other polytopes are obtained from $k$-antiprisms, $k\geqslant 4$. 
\end{remark}

In \cite[Theorem 9.13]{E19}, this result was improved. 

\begin{theorem}[\cite{E19}]\label{I-th}
A $3$-polytope is an ideal right-angled $3$-polytope if and only if either it is a $k$-antiprism $A(k)$, $k\geqslant 3$, or it can be 
obtained from the $4$-antiprism by operations of a restricted edge-twist.
\end{theorem}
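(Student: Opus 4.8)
The plan is to deduce the statement from the Andreev-type combinatorial characterization of ideal right-angled $3$-polytopes: $P$ is ideal right-angled if and only if $G(P)$ is $4$-valent, $3$-connected and planar and has no prismatic $3$- or $4$-circuit, a prismatic $k$-circuit being a cyclic sequence of distinct facets in which consecutive ones share an edge while non-consecutive ones are disjoint. (The four facets surrounding a $4$-valent vertex form a $4$-circuit, but it is not prismatic: the two opposite facets still meet at the vertex.) Granting this, the ``if'' direction is routine. The antiprisms, and in particular $A(4)$, are ideal right-angled, and by the very definition of an edge-twist the result of such an operation is again ideal right-angled; hence every antiprism and everything obtained from $A(4)$ by restricted edge-twists lies in the class. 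All the content is therefore in the ``only if'' direction.

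For ``only if'' I would argue by induction on the number of vertices, the reducing step being an inverse restricted edge-twist, which strictly decreases this number. The octahedron $A(3)$ needs no argument: it is one of the listed antiprisms, and moreover no edge-twist at all is applicable to it, since any two edges of a triangular facet share a vertex and so cannot join four distinct vertices; in particular no polytope can be reduced \emph{to} $A(3)$. For a polytope $P$ that is not an antiprism the goal is the following reduction lemma: $P$ admits an inverse restricted edge-twist to a smaller ideal right-angled polytope $P'$, chosen so that $P'$ is either again a non-antiprism or is $A(4)$ itself. Given the lemma the theorem follows by induction: if $P'$ is a non-antiprism it is obtained from $A(4)$ by the inductive hypothesis, and if $P' = A(4)$ this is immediate; either way, composing with the final edge-twist exhibits $P$ as obtained from $A(4)$.

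To prove the reduction lemma I would start from Theorem~\ref{V-th}, which already provides \emph{some} edge-twist reducing $P$ to a smaller ideal right-angled polytope. The two new points to establish are that a reducing edge-twist can be taken \emph{restricted} — acting on two edges adjacent to a common edge, i.e. at four vertices consecutive along the boundary of a facet — and that the inverse twist keeps us inside the class, that is, it preserves $4$-valence and creates no prismatic $3$- or $4$-circuit. This I would verify by inspecting the local picture at the four chosen vertices: the inverse twist only rearranges the edges in a bounded neighbourhood, so the only circuits that can be shortened into prismatic $3$- or $4$-circuits pass through that neighbourhood, and their possible shapes can be listed using the planarity and $4$-valence of $P$.

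The main obstacle is exactly to show that such a good restricted configuration always exists when $P$ is not an antiprism, and that the reduction does not accidentally produce a larger antiprism. The plan here is a contrapositive, forced-structure argument. Assuming that at every admissible four-vertex configuration the inverse restricted edge-twist is either inapplicable or produces a short prismatic circuit, one uses the absence of prismatic $3$- and $4$-circuits in $P$ together with $3$-connectivity to propagate the obstruction from facet to facet; I expect this to force all but two distinguished facets of $P$ to be triangles arranged in the rigid band-between-two-caps pattern, i.e. to force $P$ to be an antiprism, contrary to hypothesis. Carrying out this propagation cleanly, controlling the finitely many local cases, and simultaneously checking the two bookkeeping conditions (no surviving prismatic circuit, and no jump to a larger antiprism) is where I expect essentially all of the difficulty to be concentrated.
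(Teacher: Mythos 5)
There is a genuine gap, and you have in effect flagged it yourself. Note first that the paper contains no proof of Theorem~\ref{I-th}: it is imported from \cite[Theorem 9.13]{E19}, where it is established in the dual language of simple quadrangulations of the sphere, by refining the generation theorem of \cite{BGGMTW05} (duals of antiprisms are the pseudo-double wheels, which are the irreducible quadrangulations, and edge-twists are dual to the expansion moves there). Your architecture --- an Andreev/Rivin-type combinatorial characterization plus downward induction via inverse restricted edge-twists --- is the right shape and is essentially the primal version of that route. But the entire content of the theorem, as opposed to Theorem~\ref{V-th} which you may assume, is exactly your ``reduction lemma'': every non-antiprism ideal right-angled polytope admits an inverse \emph{restricted} edge-twist whose result stays in the class and is either again a non-antiprism or $A(4)$. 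Your treatment of this lemma consists of ``I expect this to force $P$ to be an antiprism'' and ``where I expect essentially all of the difficulty to be concentrated''; no propagation argument is carried out and no local case analysis is given. Theorem~\ref{V-th} does not close the gap, since it supplies only an unrestricted twist with no control over where the reduction lands. A plan that defers precisely the step distinguishing the theorem from its known predecessor is a plan, not a proof.

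Two further points matter even at the level of the outline. First, in your characterization the prismatic $3$-circuit condition is vacuous: the dual of a $4$-valent polytope is a quadrangulation, hence bipartite, so three pairwise adjacent facets cannot exist; the substantive condition is the absence of $4$-circuits not surrounding a vertex, equivalently simplicity of the dual quadrangulation. This is harmless in itself, but it is also the tool you invoke for the ``no new short circuit'' verification after an inverse twist, so it must be stated and used precisely. Second, the ``no jump to a larger antiprism'' condition is not bookkeeping but a load-bearing part of the lemma: the antiprisms $A(k)$, $k\geqslant 5$, appear as a separate alternative in the statement precisely because they are not in general reachable from $A(4)$ (their duals being the irreducible base cases of the generation theorem). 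So if the only available inverse restricted twist from some non-antiprism $P$ lands on $A(k)$ with $k\geqslant 5$, your induction breaks, and you would have to prove that such a $P$ admits a different reduction --- a statement requiring a concrete combinatorial analysis of the neighborhoods of the two $k$-gonal facets, toward which the proposal offers nothing.
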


The following result is straightforward from the definitions.
\begin{proposition}
Any edge-twist transforms a nonselfcrossing Eulerian cycle to a nonselfcrossing Eulerian cycle in the new polytope. 
\end{proposition}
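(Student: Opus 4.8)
The plan is to treat the edge-twist as a purely local modification and to check that the two defining properties of a nonselfcrossing Eulerian cycle survive it. First I would fix a closed disk $D$ containing the two edges $e_1,e_2$ of the facet $F$ together with the part of $F$ between them, chosen so small that outside $D$ neither the graph nor the cycle $\gamma$ is altered by the twist. The operation of Figure~\ref{e-skr} leaves the four endpoints of $e_1,e_2$ and all of their other incident edges in place, and it does not change the angular slot in which any surviving edge meets these vertices; only the interiors of $e_1,e_2$ are re-drawn inside $D$. Hence everything reduces to understanding the finitely many ways $\gamma$ can run through $D$, which are exactly the local pictures of Figure~\ref{e-skr}.

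Second, I would record $\gamma$ at each of the four affected vertices by its turning data: the partition of the four incident edge-ends into the two non-straight (left/right) pairs actually used by $\gamma$. Because the twist preserves the slots at these vertices, I transport the same partitions to the new graph, now pairing the new edges in the slots vacated by $e_1,e_2$. With this prescription two of the three requirements are immediate: every edge, including the two freshly drawn ones, is covered exactly once, so the resulting system of walks is Eulerian; and at every vertex the pairing is still one of the two turning pairs and never the straight-through one, so the system never crosses itself.

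The one substantive point, and the step I expect to be the main obstacle, is connectivity: a priori the transported turning data could decompose into more than one closed curve rather than a single cycle. To control this I would encode $\gamma\cap D$ as a system of disjoint arcs meeting $\partial D$ in the points where $\gamma$ exits $D$; the unchanged exterior of $\gamma$ joins these boundary points in a fixed matching. It then suffices to check, case by case over the local pictures of Figure~\ref{e-skr}, that the twisted arcs inside $D$ realise \emph{the same} matching of the boundary points as the original arcs do. Here is where the hypotheses are used: one checks that the fact that $e_1$ and $e_2$ cobound the single facet $F$, together with the non-crossing pairings at the four vertices, forces the two strands of $\gamma$ running along $e_1$ and $e_2$ to be coherently situated with respect to $F$, so that the reconnection produced by the twist recombines them into one strand rather than pinching off a separate loop. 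Once the boundary matching is shown to be preserved, gluing the twisted interior to the untouched exterior reproduces a single closed curve, and the three properties together say exactly that this curve is a nonselfcrossing Eulerian cycle on the new polytope. Combined with Theorems~\ref{V-th} and~\ref{I-th}, this is what lets one propagate such cycles from antiprisms to every ideal right-angled $3$-polytope.
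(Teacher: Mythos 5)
Your framework --- a small disk $D$ containing $e_1,e_2$, the cycle recorded as arcs in $D$, and the untouched exterior viewed as a fixed matching on the boundary points --- is the right one, but the execution has a genuine error rooted in a misreading of the operation. An edge-twist does not merely re-draw the interiors of $e_1$ and $e_2$: it replaces them by two arcs that cross at a \emph{new} $4$-valent vertex $v$ (this is why the twist increases the number of ideal vertices, and why Theorem~\ref{V-th} can generate all ideal right-angled polytopes from antiprisms). Your proof never assigns turning data at $v$ --- you speak of ``two freshly drawn edges'' where in fact there are four edge-germs meeting at a new vertex at which the cycle must be told how to turn. Worse, your central step --- checking that the twisted arcs realise \emph{the same} matching of the boundary points as the original arcs --- is precisely the forbidden configuration. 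Writing $e_1=ab$, $e_2=cd$ and letting $a',b',c',d'$ be the points where the two strands of $\gamma$ cross $\partial D$, before the twist the interior matching is $\{a'b'\},\{c'd'\}$; after the twist, any pair of arcs through $v$ realising this same matching must cross transversally at $v$, i.e. the resulting cycle would selfcross at the new vertex. So the boundary matching \emph{cannot} be preserved, and connectivity is genuinely at stake rather than settled by your case check.

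The correct short argument (note the paper records no proof at all, declaring the statement straightforward from the definitions) runs as follows. At $v$ there are exactly two nonselfcrossing pairings of the four edge-germs, realising the interior matchings $\{a'c',b'd'\}$ and $\{a'd',b'c'\}$. The exterior $\gamma\setminus D$ consists of two paths pairing the four boundary points, and since $\gamma$ is a single closed cycle containing the interior arcs $a'b'$ and $c'd'$, this exterior matching cannot be $\{a'b',c'd'\}$, hence is itself one of $\{a'c',b'd'\}$, $\{a'd',b'c'\}$. Choose at $v$ the turning pairing \emph{different} from the exterior matching: then interior arcs and exterior paths alternate around the four boundary points and close up into a single cycle, whereas the other choice splits $\gamma$ into two components. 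Both candidate pairings are non-straight at $v$, and at the four old vertices the slots and turning data are unchanged (this part of your argument is correct); so exactly one of the two reroutings yields a nonselfcrossing Eulerian cycle on the new polytope, proving the proposition. In short: the induced cycle is not obtained by preserving the local picture, but by a forced re-pairing at the new vertex, with the connectivity-preserving choice selected by comparing interior and exterior matchings.
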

\begin{corollary}
The $3$-antiprism $A(3)$ (octahedron)  has exactly $2$ combinatorially different nonselfcrossing Eulerian cycles, 
the $4$-antiprism $A(4)$  has exactly $7$ combinatorially different nonselfcrossing Eulerian cycles, 
and they correspond to $7$ nonselfcrossing Eulerian cycles (perhaps some of them are combinatorially equivalent) 
in any polytope different from antiprisms,  and any antiprism $A(k)$ has at least $2$ combinatorially different cycles.
\end{corollary}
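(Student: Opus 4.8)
The plan is to assemble the four assertions from the enumerations already carried out and from the transfer principle supplied by the edge-twist Proposition together with Theorem~\ref{I-th}. The first two assertions are the direct outputs of Examples~\ref{ex:oct} and~\ref{A4ex}: for the octahedron $A(3)$ the case analysis of Fig.~\ref{A3cycles} yields exactly two orbits of nonselfcrossing Eulerian cycles under the combinatorial symmetry group, and for $A(4)$ the analysis of Fig.~\ref{A4cycles} yields exactly seven. To make the enumeration self-contained I would phrase it locally: a nonselfcrossing Eulerian cycle is the same datum as a choice, at each $4$-valent vertex, of one of the two non-crossing pairings of the four incident half-edges (the two ways of turning without going straight, i.e. the vertex-splitting of Construction~\ref{con:EC}), subject to the global constraint that the resulting system of curves is connected, i.e. a single cycle. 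This gives at most $2^{\#V}$ candidates ($2^6$ for $A(3)$, $2^8$ for $A(4)$), which I would sieve by connectivity and then reduce modulo the full combinatorial symmetry group, which has order $48$ for the octahedron and order $4k$ for $A(k)$ with $k\geqslant 4$; the surviving orbit counts are $2$ and $7$ respectively.

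The third assertion follows by transport of structure. By Theorem~\ref{I-th}, every ideal right-angled $3$-polytope that is not a $k$-antiprism is obtained from $A(4)$ by a finite sequence of restricted edge-twists. Fixing such a sequence and applying the preceding Proposition once per twist, I obtain a well-defined map from the seven cycles on $A(4)$ to nonselfcrossing Eulerian cycles on the target polytope $P$; this produces the seven cycles asserted. The caveat ``perhaps some of them are combinatorially equivalent'' is exactly the statement that this map need not be injective on symmetry-orbits: two cycles inequivalent on $A(4)$ may be carried to cycles that an automorphism of $P$ identifies. I would not attempt to control this, since the corollary only claims the existence of the seven images.

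For the last assertion I would invoke Example~\ref{ex:ak}, which exhibits two nonselfcrossing Eulerian cycles on every $A(k)$, and then verify that they lie in distinct orbits of the symmetry group. The efficient route is a symmetry-invariant: for instance the cyclic turn-sequence of the cycle (the length-$4k$ word in left/right turns read around the cycle, taken up to rotation, reversal, and the mirror swap $L\leftrightarrow R$), or, equivalently, the combinatorial type of the simple polytope $Q(A(k),\gamma)$ produced by Construction~\ref{con:EC}. Computing either invariant for the two cycles of Fig.~\ref{k-anti} shows they differ, so no combinatorial symmetry of $A(k)$ can match them, giving at least two inequivalent cycles.

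The main obstacle is the completeness half of the second assertion: verifying that the seven orbits on $A(4)$ are genuinely all of them, with none missed and no two coincident, which is the exhaustive case analysis encoded in Fig.~\ref{A4cycles}. The connectivity sieve on $2^8$ sign-patterns followed by the order-$16$ symmetry reduction is mechanical but must be done carefully; the analogous step for $A(3)$ is small enough to check by hand, and the transfer argument for the remaining two assertions is purely formal once this enumeration and Example~\ref{ex:ak} are in place.
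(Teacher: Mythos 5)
Your proposal is correct and follows essentially the same route as the paper, whose proof simply cites Examples~\ref{ex:oct}, \ref{ex:ak}, and \ref{A4ex} (the figure enumerations for $A(3)$ and $A(4)$, the two cycles on $A(k)$) combined with the edge-twist Proposition and Theorem~\ref{I-th} for the transfer to non-antiprisms. The extra detail you supply --- the $2^{\#V}$ vertex-splitting sieve modulo the symmetry group, and the turn-sequence invariant distinguishing the two cycles on $A(k)$ --- merely makes explicit what the paper delegates to Fig.~\ref{A3cycles}, Fig.~\ref{A4cycles}, and Fig.~\ref{k-anti}.
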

\begin{proof}
This follows from Examples~\ref{ex:oct}, \ref{ex:ak}, and \ref{A4ex}.
\end{proof}

\begin{question}
To enumerate all combinatorially different nonselfcrossing Eulerian cycles in any ideal right-angled $3$-polytope. To find estimates for their number. Similarly for nonselfcrossing Eulerian theta-subgraphs and $K_4$-subgraphs in hyperbolic right-angled $3$-polytopes of finite volume with $2$ and $4$ finite vertices.
\end{question}

\subsection{Transformations of nonselfcrossing Eulerian cycles}
\begin{definition}
We will call two edges $E_1$ and $E_2$ of a simple $3$-polytope $P$ not lying in a Hamiltonian cycle $\Gamma$ {\it conjugated}, 
if each edge intersects both components of the complement in $\Gamma$ to the vertices of the other edge 
(in other words, if $\Gamma\cup E_1\cup E_2$ is homeomorphic to the full graph $K_4$ on four vertices). We call two vertices
of an ideal right-angled $3$-polytope $Q$ {\it conjugated along the nonselfcrossing Eulerian cycle $\gamma$}, if 
the corresponding edges of $P_\gamma$ are conjugated.
\end{definition}

\begin{proposition}
The circles in $C_\Gamma$ corresponding to the edges 
of $P$ not lying in $\Gamma$ are linked if and only if the edges are conjugated.
\end{proposition}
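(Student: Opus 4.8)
The plan is to work directly with the topological picture established in the construction: $N(Q,\Lambda_{\Gamma_Q})\simeq S^3$ is glued from four copies of $Q$, indexed by $\mathbb{Z}_2^2$, where the Hamiltonian cycle $\Gamma$ separates $\partial Q$ into two hemispheres colored by $e_1$ and $e_2$. Recall that the ball $Q\times(0,0)$ glues to $Q\times(1,0)$ along the $e_1$-hemisphere, and $Q\times(0,1)$ glues to $Q\times(1,1)$ likewise, after which the two resulting balls are glued along their full boundaries to form the sphere. An edge $E$ not lying in $\Gamma$ yields a circle in $C_\Gamma$ obtained by doubling $E$ across the branch locus; concretely, the circle runs through $E$ in the various copies of $Q$ determined by which hemisphere the two endpoints of $E$ fall into. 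First I would fix two such edges $E_1, E_2$ and track exactly which of the four copies of $Q$ each of the two corresponding circles passes through, using the coloring rule that assigns $e_1$ or $e_2$ to a facet according to the component of $\partial Q\setminus\Gamma$ containing it.

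The heart of the argument is to compute the linking number (or at least detect nontriviality of the $\mathbb{Z}_2$ linking) of the two circles in terms of the combinatorial relationship between $E_1$ and $E_2$ relative to $\Gamma$. The key observation is that $\Gamma$, being a Hamiltonian cycle in a planar (hence sphere-embedded) graph, is an unknotted circle on $S^2=\partial Q$, and each non-cycle edge $E_i$ is a chord joining two points of this circle. Two chords of a circle are ``linked'' as arcs precisely when their endpoints alternate around the circle, i.e.\ each chord separates the two endpoints of the other along $\Gamma$. This is exactly the condition that $\Gamma\cup E_1\cup E_2$ is homeomorphic to $K_4$, which is the definition of conjugated edges. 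So the plan is to show that the alternation/interleaving of the endpoints of $E_1$ and $E_2$ on the Hamiltonian circle is equivalent to the two associated circles in $C_\Gamma$ having nonzero mod-$2$ linking number in $S^3$.

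To make this rigorous I would set up the computation in the branched cover / sphere model explicitly. One clean route is to use the fact that the two hemispheres of $\partial Q\setminus\Gamma$ become the two halves of the gluing, so that a chord $E$ whose endpoints both lie on $\Gamma$ lifts to a properly embedded arc in the $3$-ball whose doubled image is the circle in $C_\Gamma$. Then the linking number of the two circles can be read off as the intersection number of a Seifert arc spanning one chord with the lift of the other chord. When the endpoints interleave, any spanning surface for the first circle must intersect the second circle an odd number of times; when they do not interleave, the chords bound disjoint disks on $\partial Q$ that can be pushed off to separate the two circles, giving linking number zero. I would organize this as: (i) reduce to arcs on the ball by the hemisphere decomposition, (ii) identify ``conjugated'' with ``interleaved on $\Gamma$,'' and (iii) prove interleaved $\Leftrightarrow$ linked by an explicit Seifert-surface intersection count.

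The main obstacle I anticipate is step (iii): carefully verifying that the doubling across the branch set turns the interleaving of chords into a genuine nonzero linking number, rather than merely a nonzero intersection of the chords' spanning arcs, since the gluing that forms $S^3$ identifies boundary hemispheres in a way that can introduce or cancel crossings. I would handle this by choosing an explicit product structure on a collar of $\Gamma$ and tracking orientations through the two gluings, or alternatively by invoking a Mayer--Vietoris / Alexander-duality computation of $H_1(S^3\setminus(C_1\cup C_2))$ versus $H_1(S^3\setminus C_1)$, where the vanishing or nonvanishing of the relevant connecting map records precisely the linking. The cleanest presentation will likely be the direct geometric one, reducing everything to the elementary planar fact that two chords of a circle link as arcs iff their endpoints alternate, together with the observation that this alternation is exactly the $K_4$ condition defining conjugacy.
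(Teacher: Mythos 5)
Your proposal is correct, and it is in essence a rigorous elaboration of the paper's own proof, which consists of a single sentence: the claim ``becomes evident if we look at the link in a way shown in Fig.~\ref{Octfig} on the right'' --- a proof by inspection of a picture. Your decomposition is exactly the structure behind that picture: $S^3=B_1\cup_S B_2$ with $B_1=Q\times(0,0)\cup Q\times(1,0)$ and $B_2=Q\times(0,1)\cup Q\times(1,1)$ glued along the sphere $S$ obtained by doubling one hemisphere of $\partial Q\setminus\Gamma$, and your identification ``conjugated $\Leftrightarrow$ endpoints of the two chords interleave on $\Gamma$'' is immediate from the paper's $K_4$ definition. Two points deserve care in a write-up. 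First, a small inaccuracy: which copies of $Q$ a circle passes through is governed not by ``which hemisphere the two endpoints of $E$ fall into'' (the endpoints are vertices on $\Gamma$ with full stabilizer $\mathbb Z_2^2$, hence a single preimage each) but by the component of $\partial Q\setminus\Gamma$ containing the \emph{interior} of $E$; concretely, a chord in the component doubled to form $S$ yields a circle lying entirely on $S$, while a chord in the other component yields a circle meeting $S$ only at its two endpoints, with one arc on the equatorial disk of each ball $B_i$. Exploiting this asymmetry makes your step (iii) nearly immediate: if the endpoints interleave (which forces the two chords to lie in different components), then the endpoints of $E_1$ lie on opposite sides of the circle $C_2\subset S$, so pushing one of the two disks that $C_2$ bounds on $S$ slightly into $B_1$ gives a disk that the arc of $C_1$ in $B_1$ must cross an odd number of times, whence $\mathrm{lk}(C_1,C_2)$ is odd and the circles are linked. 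Second, for the converse you correctly sense that vanishing linking number alone does not yield unlinkedness; the clean fix, which is the precise form of your ``disjoint disks pushed off'' remark, is that when the endpoints do not interleave (including the case of two chords in the same component) the double of the subdisk cut off from the hemisphere by $E_1$ together with the arc of $\Gamma$ avoiding the endpoints of $E_2$ is an embedded disk bounded by $C_1$ and disjoint from $C_2$, and a regular neighborhood of it is a ball splitting the two circles. With these adjustments your outline is a complete proof, and it buys what the paper's figure-citation does not: an argument valid for an arbitrary simple $3$-polytope $Q$ and Hamiltonian cycle $\Gamma$, with no appeal to a drawing of the link.
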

\begin{proof}
This becomes evident if we look at the link in a way shown in Fig.~\ref{Octfig} on the right.
\end{proof}

\begin{lemma}\label{CEL}
Let $\Gamma$ be a Hamiltonian cycle in a simple $3$-polytope $P$. Then each edge in $M_\Gamma$ has a conjugated edge.
\end{lemma}
\begin{proof}
Indeed, let the edge $E$ of $M_\Gamma$ have no conjugated edges. $E$ is the 
intersection of two facets $F_i$ and $F_j$ of $P$ lying in the closure of
the same connected component of $\partial P\setminus\Gamma$. Then both vertices of $E$ belong to the same facet $F_k$
lying in the closure of the other connected component.  In this case $E$ belongs to $F_k$, which is a contradiction. 
\end{proof}
\begin{corollary}\label{cor:HClinked}
Each circle of the link $C_\Gamma$ corresponding to a Hamiltonian cycle $\Gamma$ 
in a simple $3$-polytope $P$ is linked to at least one 
other circle of $C_\Gamma$.
\end{corollary}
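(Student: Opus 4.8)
The plan is to deduce the statement directly from the two preceding results, so the proof will be essentially a one-line combination once the correspondence between circles and matching edges is made explicit. First I would recall that, since $\Gamma$ is Hamiltonian, the edges of $Q$ not lying in $\Gamma$ form a \emph{perfect} matching $M_\Gamma$, and by the construction of $C_\Gamma$ each such edge corresponds to exactly one circle of the link, with distinct edges giving distinct circles. Thus fixing an arbitrary circle $c$ of $C_\Gamma$ is the same as fixing an edge $E$ of $Q\setminus\Gamma$.

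Next I would invoke the Lemma asserting that each edge of $Q\setminus\Gamma$ has a conjugated edge: applying it to $E$ produces an edge $E'$ of $Q\setminus\Gamma$ that is conjugated to $E$. Here I would note that $E'$ is necessarily distinct from $E$, since conjugation by definition means $\Gamma\cup E\cup E'$ is homeomorphic to $K_4$ on four vertices, which requires two genuinely different chords of $\Gamma$. Consequently $E'$ corresponds to a circle $c'$ of $C_\Gamma$ with $c'\neq c$.

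Finally I would apply the Proposition characterizing linking: two circles of $C_\Gamma$ are linked if and only if the corresponding edges of $Q\setminus\Gamma$ are conjugated. Since $E$ and $E'$ are conjugated, the circles $c$ and $c'$ are linked, and $c'\neq c$, so $c$ is linked to at least one other circle of $C_\Gamma$. As $c$ was arbitrary, this proves the claim. I do not anticipate any genuine obstacle: the entire content has been front-loaded into the Lemma (existence of a conjugated edge) and the Proposition (linking $\Leftrightarrow$ conjugation), and the only point requiring a word of care is confirming that the conjugated edge yields a \emph{different} circle, which is immediate from the $K_4$ description of conjugation.
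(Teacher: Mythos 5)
Your proof is correct and follows exactly the route the paper intends: the corollary is stated immediately after the Lemma (every edge of $Q\setminus\Gamma$ has a conjugated edge) and the Proposition (circles are linked iff the corresponding edges are conjugated), and combining them as you do is the paper's implicit argument. Your extra remark that the conjugated edge is distinct from $E$ (forced by the $K_4$ description) is a harmless and correct point of care.
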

\begin{construction}[Transformation of a nonselfcrossing Eulerian cycle along conjugated vertices]
Given two conjugated vertices $v$ and $w$ of a nonselfcrossing Eulerian cycle $\gamma$
of an ideal right-angled $3$-polytope $Q$ we can build a new nonselfcrossing Eulerian cycle in the following
way. In both vertices we change the pairs of successive edges of the cycle to complementary pairs (see Fig.~\ref{flip}). 
In Fig.~\ref{Trans} we show how the Hamiltonian cycle $\Gamma_\gamma$ is transformed under this operation 
(the new Hamiltonian cycle belongs to another polytope obtained from $P_\gamma$ by two flips).
\begin{figure}
\begin{center}
\includegraphics[width=\textwidth]{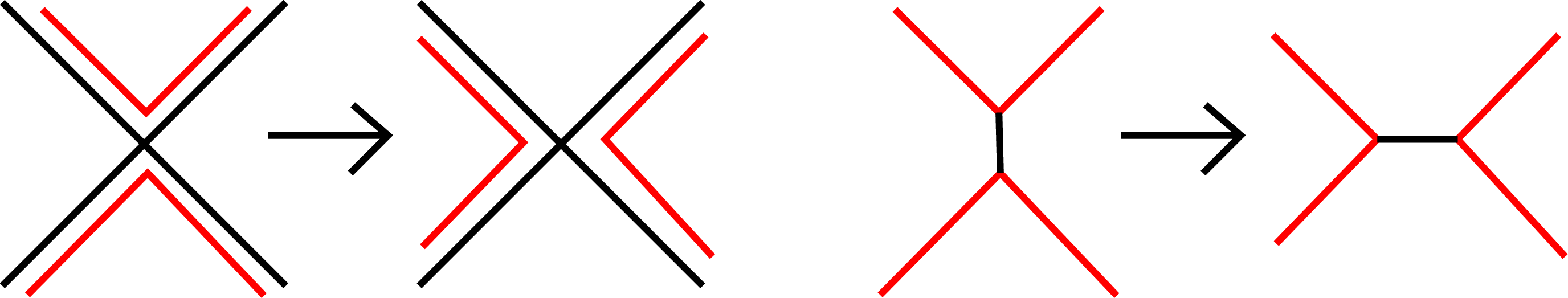}
\caption{Local transformation of the Eulerian cycle $\gamma$ and the corresponding flip of the polytope $P_\gamma$}
\label{flip}
\end{center}
\end{figure}
\begin{figure}
\begin{center}
\includegraphics[width=\textwidth]{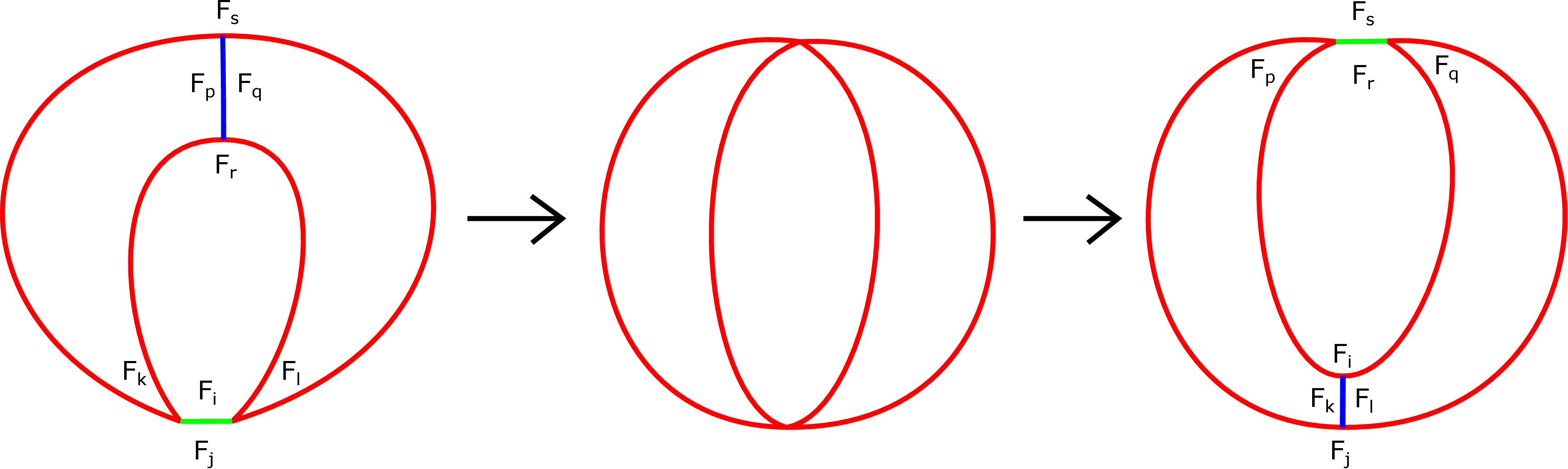}
\caption{Transformation of the Hamiltonian cycle $\Gamma_\gamma$.}
\label{Trans}
\end{center}
\end{figure}
In \cite{K68b} such a transformation of a Eulerian cycle is called a $\rho$-transformation. 
In \cite{FSW92} -- a $k_A$-transformation.
If the graph $G(Q)$ of $Q$ is a medial graph of the polytope $Q'$,
then $\gamma$ corresponds to a spanning tree $T$ in $G(Q')$ and $v$ and $w$ correspond to edges $E(v)$ and $E(w)$ such that exactly one of them belongs to $T$. 
The vertices are conjugated
if and only if the deletion of one of these edges from $T$ and the addition of the other edge produces a new spanning tree. It
corresponds to the transformed cycle.
\end{construction}
\begin{proposition}
Let $\gamma$ be a nonselfcrossing Eulerian cycle in the ideal right-angled $3$-polytope $Q$.
Then for any vertex of $Q$ there is at least one conjugated vertex and the corresponding transformation of $\gamma$.		
\end{proposition}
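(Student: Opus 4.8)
The plan is to deduce this directly from the Lemma asserting that each edge of $Q\setminus\Gamma$ has a conjugated edge, transported to $P$ and $\gamma$ through the dictionary of Construction~\ref{con:EC}. In other words, the statement is that Lemma read back through the bijection between vertices of $P$ and matching edges of $Q(P,\gamma)$, followed by a single application of the transformation construction. No new computation should be needed.

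First I would set $Q=Q(P,\gamma)$ and $\Gamma=\Gamma_\gamma$. Because $Q$ is simple (hence $3$-valent) and the Hamiltonian cycle $\Gamma$ uses exactly two of the three edges at every vertex, the edges of $Q$ not lying in $\Gamma$ form a perfect matching, with exactly one matching edge for each vertex of $P$. By Construction~\ref{con:EC} these non-$\Gamma$ edges are precisely the new edges obtained when each vertex of $P$ is split into two; in particular the correspondence between vertices of $P$ and matching edges of $Q$ is a bijection, and by the very definition of conjugated vertices, two vertices of $P$ are conjugated along $\gamma$ exactly when the two matching edges naming them are conjugated edges of $Q$.

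Next, fixing an arbitrary vertex $v$ of $P$, let $E_v\in Q\setminus\Gamma$ be the matching edge it names. Applying the Lemma to $E_v$ yields a conjugated edge $E_w$ of $Q\setminus\Gamma$; this $E_w$ names a vertex $w$ of $P$, and by the translation above $v$ and $w$ are conjugated along $\gamma$. A pair of conjugated vertices is exactly the data required to run the Construction of the transformation of a nonselfcrossing Eulerian cycle, so the promised transformation of $\gamma$ exists. Since $v$ was arbitrary, the conclusion holds for every vertex of $P$.

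I therefore expect the argument to be a translation rather than an obstacle-bearing proof. The one point that must be checked with care is that Construction~\ref{con:EC} really identifies the \emph{entire} set of edges of $Q\setminus\Gamma$ with the \emph{entire} vertex set of $P$, i.e. that the matching is perfect and no non-$\Gamma$ edge is omitted; it is exactly this exhaustiveness, together with the fact that the Lemma applies to \emph{each} such edge, that licenses the universal quantifier over all vertices $v$ in the statement.
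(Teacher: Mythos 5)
Your proposal is correct and is exactly the argument the paper intends: the statement is left without an explicit proof there precisely because it is the Lemma (``each edge of $Q\setminus\Gamma$ has a conjugated edge'') read back through the bijection of Construction~\ref{con:EC} between vertices of $P$ and the perfect matching of non-$\Gamma$ edges of $Q(P,\gamma)$, combined with the definition of conjugated vertices and the transformation construction. Your care in checking that the new edges exhaust all of $Q\setminus\Gamma_\gamma$ (via the edge count for a $4$-valent $P$) is the one point worth making explicit, and you made it.
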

\begin{proof}
The proof follows from Lemma~\ref{CEL}.
\end{proof}
\cite[Theorem 17]{K68b}, \cite[Corollary 2]{FSW92}, \cite[Assertion before Section 3]{I11} and \cite[Theorem 2.22]{IM09}) 
imply the following result.
\begin{theorem}
Any two nonselfcrossing Eulerian cycles are connected by a sequence of transformations along conjugated vertices.
\end{theorem}

\begin{proposition}
Let $\gamma$ be a nonselfcrossing Eulerian cycle in a $4$-valent plane graph $G$. If $G_\gamma$ if the graph of a 
simple $3$-polytope $P_\gamma$, then for any other nonselfcrossing Eulerian cycle $\gamma'$ the graph
$G_{\gamma'}$ is also the graph of a simple $3$-polytope.
\end{proposition}
\begin{proof}
If $P_\gamma=\Delta^3$, then $G$ is a graph with $2$ vertices and $4$ edges, and the statement is clear.
Let $P_\gamma\ne\Delta^3$.
For a single transformation along conjugated vertices the corresponding plane graph $G_{\gamma'}$
is obtained from $G_\gamma$ by two flips of edges: at each edge we contract this edge and then divide the 
$4$-valent vertex into two vertices connected by an edge in the other manner. For a simple $3$-polytope $P$ and
its edge $E=F_i\cap F_j$ with vertices $F_i\cap F_j\cap F_k$ and $F_i\cap F_j\cap F_l$ the flip gives a new 
simple $3$-polytope if and only if $F_k\cap F_l=\varnothing$. This can be proved directly using the Steinitz theorem or it follows
from \cite[Theorem 11.4]{E19}: we first cut off the edge and then straighten along the perpendicular edge of the arisen quadrangle.
In our case the edges $E_1=F_i\cap F_j$ and $E_2=F_p\cap F_q$ of $P$ are conjugated along $\Gamma_\gamma$.
Let $F_i\cap F_j\cap F_k$ and $F_i\cap F_j\cap F_l$ be the 
vertices of $E_1$ and $F_p\cap F_q\cap F_r$ and $F_p\cap F_q\cap F_t$
be the vertices of $E_2$ (as it is denoted in Fig.~\ref{Trans}). Let $C_1$ and $C_2$  be two connected components of the complement 
$\partial P_\gamma\setminus\Gamma_\gamma$.  Both facets $F_k$ and $F_l$ lie in the closure of the same
component $C_i$ and are separated by $E_2$ in this component. Similarly, $F_r$ and $F_s$ lie in the closure of the 
other component and are separated by $E_1$ in this component. If $F_k\cap F_l=\varnothing$, then we can perform a flip
and obtain the new polytope $P'$. In this polytope the Hamiltonian cycle $\Gamma_\gamma$ is transformed into two disjoint
cycles containing all the vertices of $P'$. The complement to these cycles consists of two disks and a cylinder. Moreover, $F_r$ 
and $F_s$ in the polytope $P'$ lie in different disks. Therefore, they are disjoint and we can perform a flip at the edge
$E_2$ of $P'$ giving the polytope $P''=P_{\gamma'}$ we need. If $F_r\cap F_s=\varnothing$ in $P_\gamma$, then we can firs perform 
the flip at $E_2$, and then at $E_1$. If $F_k\cap F_l\ne\varnothing$,
then  $F_k\cap F_l=E_2$ and $F_k=F_p$, $F_l=F_q$. If also $F_r\cap F_s\ne\varnothing$, then
$F_r\cap F_s=E_1$ and $F_r=F_i$, $F_s=F_j$. Then any three of four facets $F_i$, $F_j$, $F_k$, and  $F_l$
intersect at a vertex and $P_\gamma=\Delta^3$, which is a contradiction. 

Since any two Eulerian cycles $\gamma$ and $\gamma'$ can be connected by a sequence of transformations
along conjugated vertices, the proposition is proved. 
\end{proof}

\begin{corollary}\label{corch4}
The class of $4$-valent plane graphs obtained by contraction of perfect matchings complementary to Hamiltonian cycles 
in simple $3$-polytopes consists of plane graphs without loops such that any face is bounded by a simple edge-cycle and for any pre-selected nonselfcrossing Eulerian cycle $\gamma$ the following condition holds: in the graph $G_\gamma$ if the boundary cycles of two faces intersect, then their intersection consists of exactly one edge.
\end{corollary}
\begin{remark}
The condition on loops is important, since the graph $G$ consisting of one vertex and two loops does not correspond to a simple $3$-polytope.  The graph $G_\gamma$ is a theta-subgraph.
\end{remark}
\begin{proof}[Proof of Corollary \ref{corch4}]
We will use the Steinitz theorem~\ref{StT}.

Let $P$ be a simple $3$-polytope with a Hamiltonian cycle $\Gamma$. Consider its face $F_i$. Any edge of the perfect matching
$M_\Gamma$ complementary to $\Gamma$ either belongs to $F_i$, or intersects $F_i$ by one vertex, 
or does not intersect $F_i$. Hence, when we contract the edges of $M_\Gamma$, in the new graph it is again bounded by a 
simple edge-cycle. 

On the other hand, if any face of $G$ is bounded by a simple edge-cycle, then this
condition also holds in $G_\gamma$. Let us prove that the graph $G_\gamma$ is simple. 
It contains no loops, since $G$ contains no loops. If it contains two edges connecting the same vertices, consider one of these
vertices. Since $G_\gamma$ is $3$-valent, these two edges lie in the same face, and this face is a bigon. If the 
Hamiltonian cycle $\Gamma_\gamma$ contains both edges of this bigon, then $\Gamma_\gamma$ has no other edges, 
$G$ consists of one vertex  and two loops, which is a contradiction. If $\Gamma_\gamma$ does not contain one of the edges 
of the 
bigon, then in $G$ this bigon corresponds to a loop, which is also a contradiction. Thus, $G_\gamma$ is a simple graph. If also any nonempty intersection of two boundary cycles of faces of this
graph is an edge, then by the Steinitz theorem it is a graph of a simple $3$-polytope.
\end{proof}
\subsection{Links $C_\Gamma$ corresponding to Hamiltonian cycles in right-angled $3$-polytopes}
Theorem~\ref{thHL} (as well as  \cite[Theorems 6.5 and 11.6]{E19}) implies the following result.
\begin{proposition}\label{RAprop}
Let $\Gamma$ be a Hamiltonian cycle in a Pogorelov polytope $P$. Then $C_\Gamma$ is hyperbolic, and its
$2$-sheeted branched covering $N(P,\widetilde{\Lambda}_\Gamma)$ is a compact hyperbolic manifold.
\end{proposition}

\begin{example}
The dodecahedron is a unique Pogorelov polytope with minimal number of facets (equal to $12$).
Up to combinatorial symmetries it has a unique Hamiltonian cycle. In Fig.~\ref{HamD}, we show this Hamiltonian cycle, 
and the way how the associated ideal right-angled polytope is obtained from $A(4)$ by a sequence of two restricted edge-twists.
We also show another polytope corresponding to another Eulerian cycle in the $4$-antiprism. The corresponding links have homeomorphic complements, but the first link has the $2$-sheeted branched covering space with a hyperbolic structure, 
and the $2$-sheeted branched covering space of the second link contains incompressible tori corresponding
to $4$-belts  (see more details in \cite{E22a}).
\begin{figure}
\begin{center}
\includegraphics[width=\textwidth]{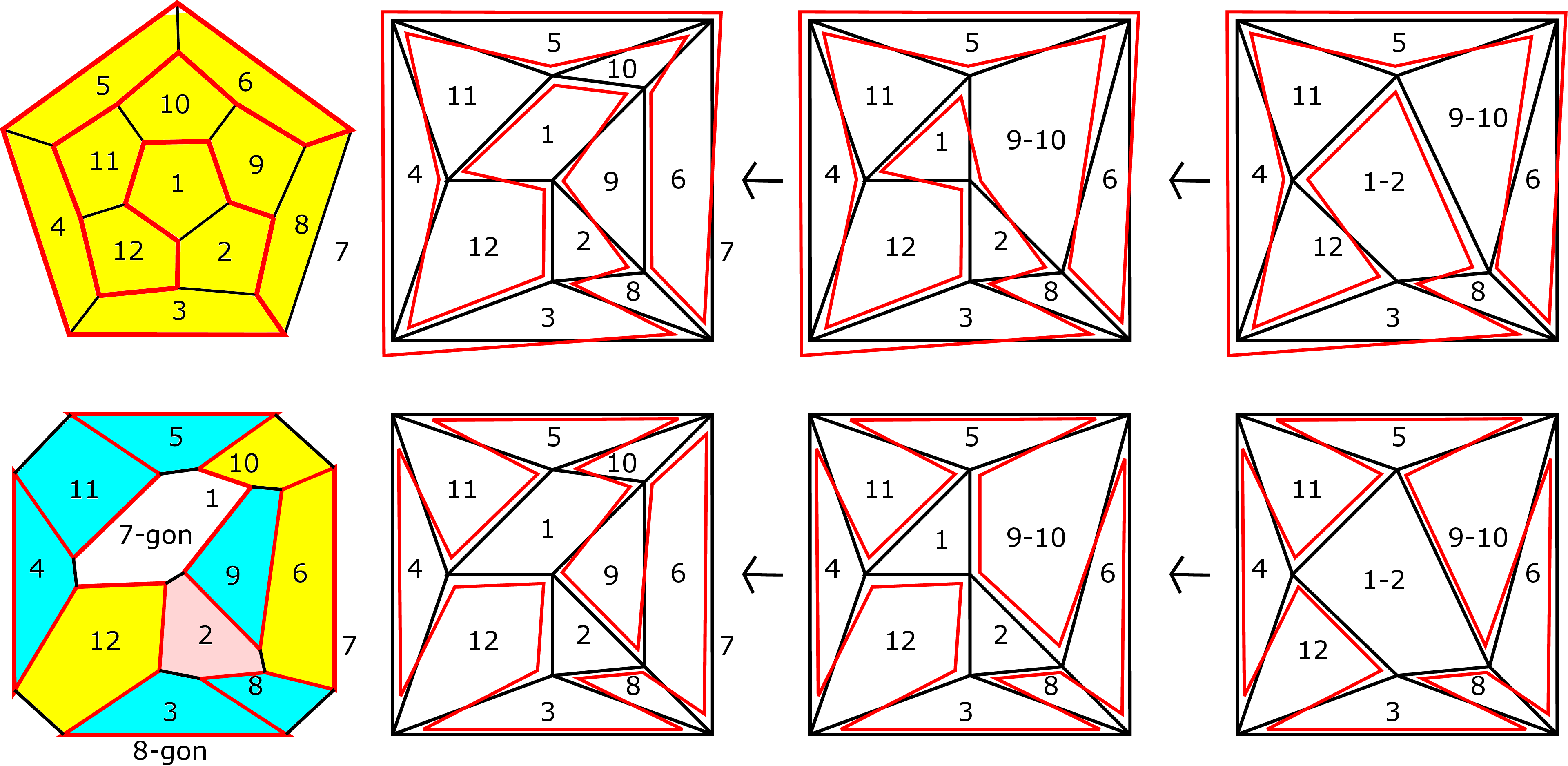}
\caption{Hamiltonian cycle in the dodecahedron}
\label{HamD}
\end{center}
\end{figure}
\end{example}
\begin{example}
A {\it fullerene} is a simple $3$-polytope with only pentagonal and hexagonal faces. It is known that any fullerene is a right-angled
hyperbolic polytope and the dodecahedron is the fullerene with minimal number of facets (see more details in \cite{E19}). 
It was shown by F.~Kardo\v{s} \cite{K20}
that any fullerene has a Hamiltonian cycle. Each Hamiltonian cycle in a fullerene corresponds to a hyperbolic link with hyperbolic $2$-sheeted branched covering space.
\end{example}

Let us denote by~$P_8$ the simple polytope  with $8$ facets drawn in the center at the bottom in Fig.~\ref{Octfig}.  It has a 
nontrivial $4$-belt consisting of pentagons and surrounding two quadrangles on each side. It was shown in \cite{E19} that the 
polytope $P_8$ has some properties similar to properties of almost Pogorelov polytopes. 
\begin{remark}
The polytope $P_8$ is known as the $3$-dimensional {\it pellytope} ($n$-dimensinal pellytope is a particular 
simple polytope with $3n-1$ facets. The number of its vertices is equal to {\it Pell's} number $N_{n+1}$, where
$N_1=1$, $N_2=2$, and $N_n=2N_{n-1}+N_{n-2}$.  Pellytopes determine a family of binary geometries, see \cite{BTTM25}. 
Such geometries have  been recently introduced in particle physics in connection with stringy integrals \cite{AHLT23}).
The polytope $P_8$ was the first example of a simple $3$-polytope whose moment-angle manifold 
$\mathcal{Z}_{P_8}$ has a nontrivial Massey product \cite{B03}.  Also $P_8$ is a unique {\it medial polytope} 
with $8$ facets (a simple $3$-polytope with all  facets $q$- and $(q+1)$-gons for some $q$)  and is a candidate to have the combinatorial type 
of a polytope with maximal volume among all $3$-polytopes with $8$ facets and given surface area \cite{G35}.  
\end{remark}
\begin{proposition}\label{APHC}
Let $\Gamma$ be a Hamiltonian cycle in an almost Pogorelov $3$-polytope $P$ or the polytope $P_8$. 
\begin{enumerate}
\item The link $C_\Gamma$ is hyperbolic if and only if each quadrangle of $P$ has three edges in~$\Gamma$.
\item The cube $I^3$ and the $5$-prism $M_5\times I$ do not have Hamiltonian cycles with the above condition. 
The polytope $P_8$ up to combinatorial symmetries has a unique Hamiltonian cycle with the above condition shown in Fig.~\ref{Octfig}.
\item For $P\notin \{I^3, M_5\times I, P_8\}$ the $2$-sheeted branched covering space corresponding to $C_\Gamma$ becomes hyperbolic after cutting along incompressible Klein bottles corresponding to quadrangles~of~$P$.  For $P=P_8$ the $2$-sheeted branched covering space splits into two manifolds with geometry $\mathbb L^2\times \mathbb R$ after cutting along the incompressible torus corresponding to the $4$-belt consisting of pentagons. 
\end{enumerate}
\end{proposition}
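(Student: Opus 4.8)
The plan is to treat the three parts separately, using throughout that passing from $Q$ to $P$ is precisely the inverse of Construction~\ref{con:EC}.

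\emph{Part (1).} First I would contract each edge of the matching $M_\Gamma$: this merges its two $3$-valent endpoints into a single $4$-valent vertex, so the contracted graph $G$ is $4$-valent and planar, and the Hamiltonian cycle $\Gamma$ descends to a cycle that turns left or right at every vertex, i.e.\ a nonselfcrossing Eulerian cycle $\gamma$ with $\Gamma=\Gamma_\gamma$. By the medial-graph characterization recalled earlier (see \cite{E19}), the only obstruction to $G$ being the graph of an ideal right-angled polytope is the creation of a bigonal face under the contraction. To locate such bigons I would examine a quadrangle $F=v_1v_2v_3v_4$ of $Q$: since $\Gamma$ is Hamiltonian, exactly two of the three edges at each $v_i$ lie in $\Gamma$, and a short case analysis shows that $F$ contains either a single matching edge (the case of none would force $\Gamma$ to be a separate $4$-cycle) or two matching edges, necessarily opposite. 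In the former case $F$ contracts to a triangle, in the latter to a bigon. Because $Q$ is almost Pogorelov it has no $3$-belts (hence no triangular faces) and each $4$-belt surrounds a quadrangle, so a bigon can be created only at a quadrangle; the exceptional polytope $P_8$ is checked the same way, its nontrivial pentagonal $4$-belt bounding only quadrangles. Thus $G$ is bigon-free precisely when every quadrangle carries three $\Gamma$-edges, and then $G$ is the graph of an ideal right-angled $P$ by \cite[Theorem 6.5]{E19}. Hyperbolicity of $S^3\setminus C_\gamma\cong N(P)$ follows at once from Theorem~\ref{thM}.

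\emph{Part (2).} For the two exceptional prisms I would argue by double counting incidences between $\Gamma$-edges and faces, the total being $2\cdot\#\{\text{edges of }\Gamma\}$. For the cube all six faces are quadrangles, so the condition would demand $6\cdot 3=18$ incidences against the available $16$, a contradiction. For the $5$-prism the same count forces all five vertical edges into $\Gamma$, whereupon the $\Gamma$-edges of the top pentagon would have to form a perfect matching of a $5$-cycle, which is impossible. For $P_8$ I would enumerate the Hamiltonian cycles up to its combinatorial symmetry group and test the quadrangle condition on each, as recorded in Fig.~\ref{Octfig}, leaving exactly one admissible cycle.

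\emph{Part (3).} This is the geometric core. I would pass to the orientable branched double cover $N(Q,\widetilde\Lambda_\Gamma)$ and identify the surfaces dictated by the combinatorics of $Q$. Each quadrangle records an ideal vertex, and I would show that under the $\mathbb Z_2^3$-coloring it lifts to an incompressible Klein bottle, necessarily one-sided since a two-sided surface in an orientable manifold is orientable; here ``cutting'' means removing the twisted $I$-bundle neighbourhood, leaving a torus boundary. Incompressibility I would deduce from the almost-Pogorelov hypotheses (no $3$-belts, every $4$-belt bounding a quadrangle) together with the belt analysis of \cite{E19}, and I would match each complementary piece with the double cover of a genuine ideal right-angled polytope, hence hyperbolic via Theorem~\ref{thM} and Proposition~\ref{RAprop}. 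For $P_8$ the relevant surface is instead determined by the nontrivial $4$-belt of pentagons: the coloring now makes the lift a two-sided incompressible torus, and the two sides of the belt give Seifert-fibred pieces carrying the $\mathbb L^2\times\mathbb R$ geometry. The hard part will be exactly this last step---establishing incompressibility and pinning down the geometry of the complementary pieces---where the orientability bookkeeping that distinguishes Klein bottles from tori, governed entirely by the coloring $\widetilde\Lambda_\Gamma$, is the delicate point, and where I expect to lean on the geometric decomposition results of \cite{E22a}.
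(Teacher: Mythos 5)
Your parts (2) and (3) are essentially fine, but part (1) contains a genuine gap in the sufficiency direction. You reduce everything to the claim that bigon-freeness of the contracted graph $G$ is ``the only obstruction'' to $G$ being the graph of an ideal right-angled polytope. That claim is false in general, and the paper itself warns against exactly this inference in the remark following Proposition~\ref{RAprop}: if the contracted spherical graph has no bigons then by the Steinitz theorem it is the graph of a $4$-valent convex polytope, ``but this polytope may be not ideal right-angled.'' Being ideal right-angled is the stronger condition that $G$ is the medial graph of a simple polytope (equivalently, the relevant Andreev/belt conditions hold), and you never verify it: you use the almost Pogorelov hypothesis only to locate where bigons can arise, not to establish right-angledness of $P$. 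The paper closes precisely this hole by a different route: instead of contracting the matching edges it first cuts them off and invokes \cite[Corollary 12.31]{E19} to conclude that the result is an almost Pogorelov polytope different from the $4$- and $5$-prisms \emph{all of whose quadrangles are the newly created ones}, and only then applies \cite[Theorem 6.5]{E19} (which is a statement about almost Pogorelov polytopes, not about arbitrary bigon-free $4$-valent graphs, so your citation of it is misapplied) to shrink those quadrangles into ideal vertices. Note also that to apply Corollary 12.31 when $Q=P_8$ one must additionally check that $\Gamma$ does not contain all four common edges of pentagons (the paper deduces this from the three-edges-per-quadrangle condition); your ``checked the same way'' glosses over this extra hypothesis.

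The rest compares favorably. Your necessity argument in (1) --- disjointness of matching edges plus absence of triangles confines the cases to three $\Gamma$-edges (one matching edge, yielding a triangle) or two opposite matching edges (yielding a bigon) --- matches the paper's case analysis. In part (2) you actually improve on the paper, which only says ``direct enumeration'': your incidence count for the cube ($6\cdot 3=18$ needed against $2\cdot 8=16$ available) and the forced-perfect-matching-of-$C_5$ contradiction for the $5$-prism are correct and self-contained, with only $P_8$ still requiring enumeration. For part (3) the paper's proof is a direct citation of \cite[Theorem 4.12]{E22a}, which already supplies the incompressible Klein bottles, the incompressible torus for $P_8$, and the geometry of the complementary pieces; your sketch re-derives fragments of that theorem and is not a complete independent argument (in particular, the complementary hyperbolic pieces are glued from copies of the finite-volume right-angled polytope associated to the almost Pogorelov polytope, not double covers of ideal right-angled polytopes as you suggest), so you should simply cite the theorem as the paper does.
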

\begin{proof}
Item (1) follows from Theorem~\ref{thHL} (as well as  \cite[Theorems 6.5 and 11.6]{E19}), as well as 
from \cite[Corollary 12.31]{E19}. 

Item (2) follows from the direct enumeration of Hamiltonian cycles on $I^3, M_5\times I$ and $P_8$.

By \cite[Theorem 4.12]{E22a} for $P\notin \{I^3, M_5\times I, P_8\}$ its quadrangles correspond to incompressible Klein bottles in 
$N(P,\widetilde{\Lambda}_{\Gamma})$ such that the complement to their union has a complete hyperbolic structure of finite
volume. Also by this Theorem for $P=P_8$ the $4$-belt consisting of pentagons corresponds to an incompressible torus in
$N(P,\widetilde{\Lambda}_{\Gamma})$ such that its complement consists of two manifolds with geometry 
$\mathbb L^2\times \mathbb R$. This proves item (3).
\end{proof}
\begin{corollary}
A Hamiltonian cycle in a right-angled hyperbolic $3$-polytope of finite volume corresponds to a 
Hamiltonian cycle in the corresponding almost Pogorelov polytope 
if and only if at each ideal vertex it turns left or right, but does not go straight.
\end{corollary}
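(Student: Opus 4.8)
The plan is to deduce the corollary from Proposition~\ref{APHC} by translating its combinatorial hypothesis ``each quadrangle meets the cycle in three edges'' into the local geometric condition at the ideal vertices. Let $R$ be a right-angled hyperbolic $3$-polytope of finite volume carrying a Hamiltonian cycle $\Gamma$. First I would invoke the bijection recalled above: cutting off the ideal vertices of $R$ produces an almost Pogorelov polytope $Q$ (different from the cube and the $5$-prism), whose quadrangular faces correspond bijectively to the ideal vertices of $R$ and exhaust all of its quadrangles.

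The heart of the argument is a purely local analysis at a single ideal vertex $v$, with its four edges $e_1,e_2,e_3,e_4$ in cyclic order and the quadrangle $F_v=w_1w_2w_3w_4$ replacing it, where $w_i$ lies on the remnant $e_i'$ of $e_i$. Since $\Gamma$ visits $v$ exactly once it uses two of the four edges, and I distinguish two cases. If $\Gamma$ turns at $v$, say along the adjacent pair $e_1,e_2$, then the arc must enter $F_v$ through $e_1'$ at $w_1$ and leave through $e_2'$ at $w_2$; as $w_3,w_4$ have their outward edges $e_3',e_4'$ absent from the cycle, each of them must use both of its quadrangle edges, which forces the unique Hamiltonian traversal of $F_v$ visiting $w_1,w_4,w_3,w_2$ in this order and using exactly three of the four edges of $F_v$. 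If instead $\Gamma$ goes straight at $v$, say along the opposite pair $e_1,e_3$, the arc would have to enter at $w_1$ and leave at the opposite vertex $w_3$ while still covering $w_2$ and $w_4$; since the four-cycle $F_v$ has no Hamiltonian path between opposite vertices, and the remaining vertex cannot be absorbed by any outside detour without creating a vertex of degree three, no Hamiltonian extension through $F_v$ exists. Thus $\Gamma$ turns at $v$ if and only if it extends across $F_v$, and in that case $F_v$ meets the extension in exactly three edges.

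Performing these replacements at all ideal vertices simultaneously, I obtain that $\Gamma$ turns at every ideal vertex if and only if it extends to a Hamiltonian cycle $\Gamma_Q$ on $Q$ --- still a single cycle, each ideal visit merely inflated to a three-edge arc and each finite vertex left untouched --- and then every quadrangle of $Q$ meets $\Gamma_Q$ in exactly three edges. Applying Proposition~\ref{APHC}(1) to the almost Pogorelov polytope $Q$ with this Hamiltonian cycle, shrinking the complementary perfect matching yields an ideal right-angled polytope $P$ together with an induced nonselfcrossing Eulerian cycle $\gamma$ satisfying $\Gamma_Q=\Gamma_\gamma$; this $\gamma$ is precisely the nonselfcrossing Eulerian cycle on the ideal right-angled polytope corresponding to $\Gamma$ in the sense of Construction~\ref{con:EC}. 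Conversely, if $\Gamma$ goes straight at some ideal vertex, the local analysis shows it does not extend to any Hamiltonian cycle on $Q$, so no such $\gamma$ can arise, which gives the other implication.

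The step I expect to be the main obstacle is the local analysis, and specifically its necessity half: one must verify not only that a straight passage fails to extend to the obvious Hamiltonian arc, but that the fourth vertex of $F_v$ genuinely cannot be picked up by any global rerouting, so that straightness at even one ideal vertex is a true obstruction rather than a removable inconvenience. Once this is pinned down, the remainder is a direct appeal to Proposition~\ref{APHC} through the bijection between $R$ and $Q$.
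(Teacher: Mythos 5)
Your proof is correct and follows exactly the route the paper intends: the paper states this corollary without a separate proof, as an immediate consequence of Proposition~\ref{APHC}, using precisely your dictionary between ideal vertices of the finite-volume polytope and quadrangles of the almost Pogorelov polytope (turn at an ideal vertex $\Leftrightarrow$ the quadrangle meets the lifted Hamiltonian cycle in three edges, straight passage $\Leftrightarrow$ two opposite edges or no extension at all). Your local analysis, including the degree argument ruling out any rerouting through the quadrangle in the straight case, simply makes explicit what the paper leaves implicit.
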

\begin{example}
In Fig.~\ref{PeH} we show a Hamiltonian cycle in the $3$-dimensional permutohedron intersecting each quadrangle by $3$ edges. 
After shrinking quadrangles to points we obtain a~Hamiltonian cycle in the ideal octahedron. 
\begin{figure}
\begin{center}
\includegraphics[width=.5\textwidth]{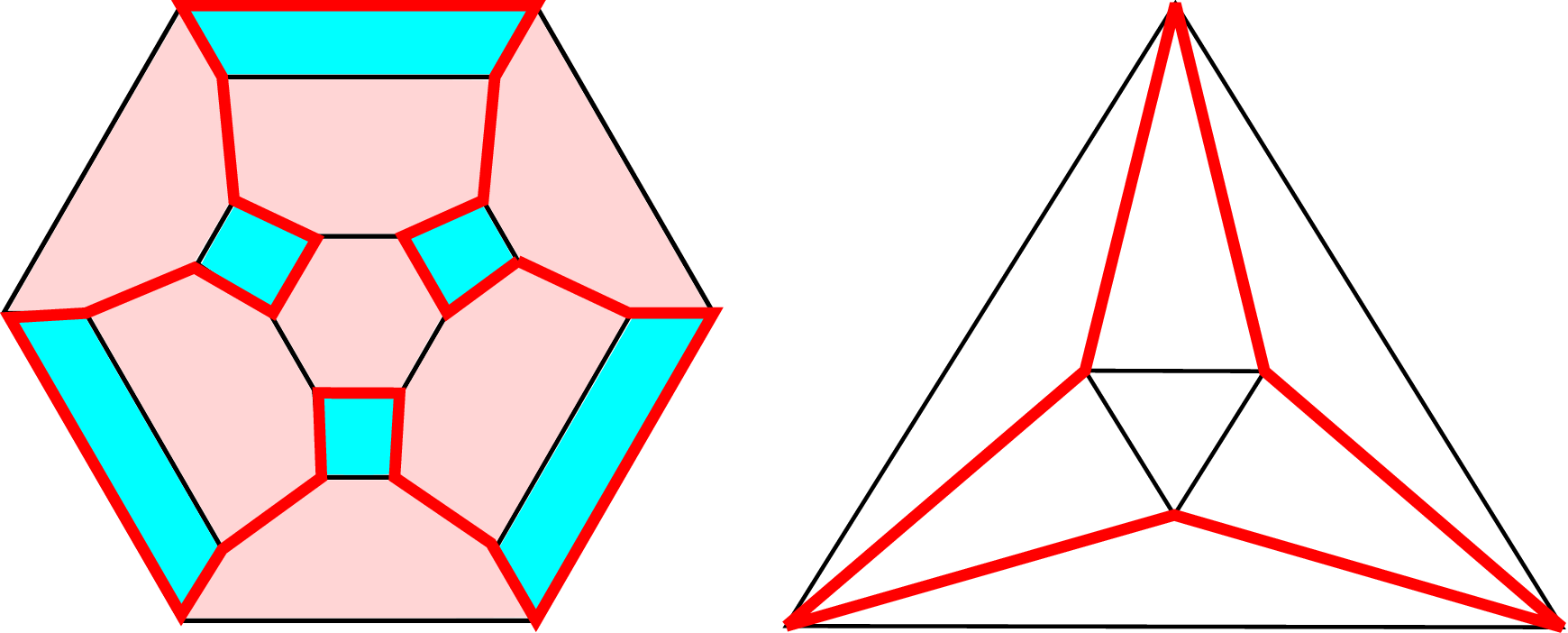}
\caption{The Hamiltonian cycle in the permutohedron corresponding to the~Hamiltonian cycle in the ideal octahedron.}
\label{PeH}
\end{center}
\end{figure}
\end{example}
\begin{question}
To characterise ideal right-angled $3$-polytopes corresponding to Hamiltonian cycles in (a) compact right-angled 
hyperbolic $3$-polytopes
(b) right-angled hyperbolic $3$-polytopes of finite volume. 
\end{question}
\begin{remark}
In \cite[Theorem 9.17]{E19}, is was proved that any ideal right-angled hyperbolic $3$-polytope $P$ 
can be obtained from an almost Pogorelov polytope or the polytope $P_8$  by a contraction of edges of a perfect matching such 
that no quadrangle contains two edges of the matching. Nevertheless, the complement to this matching may be not a Hamiltonian cycle, but a union of cycles containing all the vertices of the polytope.
\end{remark}

\section{Links $C_\Gamma$ consisting of mutually unlinked circles}\label{sec:TK4}
In this section we will give a criterion when the link $C_\Gamma$ corresponding to a Hamiltonian cycle, theta-subgraph or $K_4$-subgraph in a simple $3$-polytope $P$ consists of mutually unlinked circles. It is easy to see that this link consists of trivially
embedded circles.  
Moreover, as it was shown in Corollary~\ref{cor:HClinked} in the case of a Hamiltonian cycle in a simple $3$-polytope
each circle is linked to at least one other circle.

\begin{construction}[Cutting off a vertex of $\Gamma$]
Let $\Gamma$ be a~Hamiltonian theta-subgraph or a~Hamiltonian $K_4$-subgraph 
in a simple $3$-polytope $P$ and $v$ be one of its vertices. Then there is an operation of cutting off 
the vertex $v$, see Fig.~\ref{VcutH}. It produces a new polytope $P'$  with a triangle instead of the vertex $v$. If we chose one of the
three faces of $P$ (or, equivalently, $\Gamma$) containing $v$, then we can build uniquely a new Hamiltonian theta-subgraph
or $K_4$-subgraph $\Gamma'$ on $P'$ such that the edge of the new triangle corresponding to the chosen face 
belongs to $M_{\Gamma'}$. From the representation of $\Gamma$ on the coordinate rays of the octant 
with $v$ corresponding to the origin it is clear that the link  $C_{\Gamma'}$  is obtained from $C_\Gamma$
by an~addition of a trivial circle (for the theta-subgraph) or two trivial circles (for the $K_4$-subgraph) lying in
disjoint topological balls disjoint from  $C_\Gamma$. It follows from the~Steinitz theorem that for $P'\ne \Delta^3$ this operation is reversible: if
$P'$ has a triangle incident to a vertex $v$ of $\Gamma'$, then this triangle can be shrinked to obtain
a~new simple $3$-polytope $P$ with the Hamiltonian graph $\Gamma$ such that $(P',\Gamma')$
is obtained from $(P,\Gamma)$ but cutting off a vertex. On the level of graphs these operations correspond to the 
addition and the deletion of an edge near the vertex $v$.
\begin{figure}[h]
\begin{center}
\includegraphics[width=0.5\textwidth]{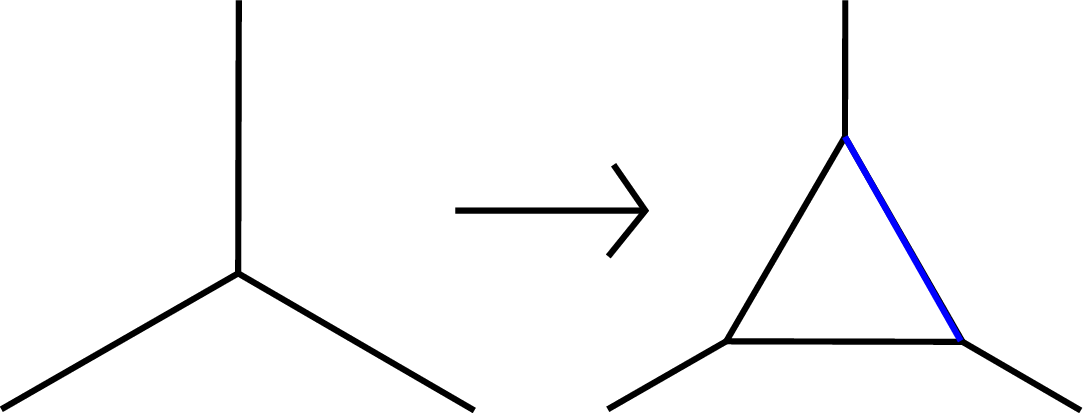}
\end{center}
\caption{Cutting off a vertex}\label{VcutH}
\end{figure}
\end{construction}
\begin{example}\label{ex:D30}
Let $\Gamma_0$ be the Hamiltonian theta-subgraph in the simplex $\Delta^3$, obtained by deletion of any edge 
from the graph $G(\Delta^3)=K_4$. Up to combinatorial symmetries it is a unique theta-subgraph in $G(\Delta^3)$.
The link $C_{\Gamma_0}$ is a trivial circle. Then for any pair $(P,\Gamma)$ obtained
from  $(\Delta^3,\Gamma_0)$ by a sequence of operations of cutting off a vertex the corresponding link~$C_{\Gamma}$
is trivial.
\end{example}
\begin{theorem}\label{Th:unl}
Let $\Gamma$ be a Hamiltonian theta-subgraph in a simple $3$-polytope $P$. Then 
\begin{enumerate}
\item the link $C_\Gamma$ consists of mutually unlinked circles if and only if each edge of $M_{\Gamma}$
connects vertices of different paths of $\Gamma$;
\item if $C_\Gamma$ consists of mutually unlinked circles and is nontrivial, then it contains 
a triple of Borromean rings; 
\item the link $C_\Gamma$ is trivial if and only if  $(P,\Gamma)$ is obtained from $(\Delta^3,\Gamma_0)$,
by a sequence of operations of cutting off a vertex.
\end{enumerate}
\end{theorem}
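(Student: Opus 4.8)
The plan is to work entirely in the octant model for $S^3=N(Q,\Lambda_\Gamma)$ furnished by the construction above: write $S^3=\mathbb{R}^3\cup\{\infty\}$ with $\mathbb{Z}_2^3$ acting by the three coordinate sign changes, take the closed positive octant as the fundamental domain $Q\setminus\{w\}$, send the vertex $v$ to the origin, $w$ to $\infty$, and the three paths of $\Gamma$ to the three positive semiaxes; the three disks of $\partial Q\setminus\Gamma$ then become the three coordinate half-planes of the octant. Each edge $E\in M_\Gamma$ lies in one coordinate plane (the plane of the disk containing it) and either joins two \emph{different} positive semiaxes or joins one semiaxis to itself. In the first case the $\mathbb{Z}_2^3$-orbit of $E$ is a single origin-symmetric quadrilateral (a ``diamond'') in that coordinate plane encircling the remaining axis; in the second case it is a pair of coplanar loops, each encircling a segment of the corresponding semiaxis (one positive, one negative). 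This matches the two bookkeeping cases (``$4$ copies'' and ``$2+2$ copies'') recorded in the construction, and reduces every linking question to explicit curves.

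For part (1), first suppose every edge of $M_\Gamma$ joins different paths, so every component is an origin-symmetric diamond. Two diamonds about the same axis are coplanar, hence split. For two diamonds about different axes, the two coordinate planes share exactly one coordinate axis; comparing the two extents along that shared axis, the component with the \emph{smaller} extent bounds its flat diamond-disk (lying in its own plane at height $0$ of the transverse coordinate) disjointly from the other component, exhibiting the pair as split. Thus $C_\Gamma$ is mutually unlinked. Conversely, suppose some edge joins a path $P_1$ to itself. Among all such ``same-path'' chords of the disk containing it, choose one, $E_0=(u_a,u_c)$, whose $P_1$-arc is innermost. Since $Q$ is simple, $u_a$ and $u_c$ are non-adjacent on $P_1$, so the open arc contains a vertex $u$, and its third edge $e_u\in M_\Gamma$ leaves $P_1$. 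Planarity forbids $e_u$ from crossing either $E_0$ or $P_1$, and innermostness forbids it from being a chord inside the same sub-disk; hence $e_u$ must enter the \emph{opposite} disk adjacent to $P_1$. Its component then lies in the perpendicular coordinate plane through the $P_1$-axis and punctures the diamond-disk of the loop $C^+_{E_0}$ once transversally, over the segment strictly between $u_a$ and $u_c$, giving linking number $\pm1$. So $C_\Gamma$ is not mutually unlinked, proving (1).

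For part (2), assume $C_\Gamma$ is mutually unlinked; by (1) it is a union of origin-symmetric diamonds, pairwise split, so all pairwise linking numbers vanish. Consequently the triple Milnor invariants $\bar\mu(ijk)$ are defined, and nontriviality forces $\bar\mu(ijk)\neq 0$ for some three components. In the coordinate-plane model this can only happen for three diamonds cyclically clasped about the three distinct axes (a triple including two coplanar diamonds about one axis has vanishing triple invariant), and such a triple is isotopic to the standard Borromean rings realized as three mutually perpendicular curves. Hence $C_\Gamma$ contains a Borromean sublink.

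For part (3), the implication ``$\Leftarrow$'' is exactly Example~\ref{ex:D30}: the base pair $(\Delta^3,\Gamma_0)$ gives the one-circle trivial link, and by the cutting-off construction each vertex truncation only adds a trivial circle inside a disjoint ball, preserving triviality. For ``$\Rightarrow$'' I would induct on the number of vertices of $Q$. If $C_\Gamma$ is trivial then, by (1) and (2), every edge joins different paths and the diamonds admit no cyclic clasping; from this I extract an innermost diamond about some axis sitting nearest a vertex of $\Gamma$, and the edge producing it, together with the initial segments of the two incident paths, bounds a triangular face at that $\Gamma$-vertex. Shrinking this triangle reverses the cutting-off operation to give $(Q',\Gamma')$ with $Q$ the cut-off of $Q'$ at that vertex; the corresponding component of $C_\Gamma$ is a split trivial circle, so $C_{\Gamma'}$ is again trivial, and induction finishes the proof. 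The main obstacle is precisely this extraction in (3): converting the geometric facts ``no cyclic clasping'' and ``innermost diamond'' into the combinatorial existence of a \emph{shrinkable} triangle incident to a vertex of $\Gamma$, and verifying that $(Q',\Gamma')$ is again a simple polytope carrying a Hamiltonian theta-graph. I expect the innermost-chord analysis used for the converse of (1) to be the tool that drives this reduction.
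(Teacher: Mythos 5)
Your octant model is exactly the paper's picture, and your treatment of the forward direction of (1) (coplanar diamonds are nested, perpendicular diamonds split via the smaller extent along the shared axis) is a correct elaboration of what the paper leaves implicit. But the converse of (1) has a slip: you choose $E_0$ innermost only among same-path chords of the \emph{same} disk, so nothing prevents the matching edge $e_u$ of the intermediate vertex $u$ from being a same-path chord in the \emph{opposite} disk with both endpoints strictly inside the arc $(u_a,u_c)$. In that case its loop meets the spanning disk of $C^+_{E_0}$ in two points with cancelling signs, and the two components are in fact split (nested loops about the same axis segment), so your ``punctures once'' claim fails for the pair you exhibited. The paper's choice is stronger and is what you need: pick the edge so that \emph{no} edge of $M_\Gamma$ has both endpoints strictly inside the arc (e.g.\ minimize the arc length over all same-path chords in both adjacent disks); then $e_u$ necessarily exits the arc or jumps to another path, and the linking number is $\pm1$. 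This is a one-line fix, but as written the case analysis is incomplete.

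The serious gap is in (2)--(3). Your pivot ``nontriviality forces $\bar\mu(ijk)\neq 0$ for some three components'' is false as a general principle: there are nontrivial links with unknotted components, vanishing pairwise linking numbers, and vanishing length-three Milnor invariants (the Whitehead link already shows pairwise data detects nothing, and Brunnian links whose lowest nonvanishing $\bar\mu$ has length $\geqslant 4$ kill the triple-invariant claim), and you give no argument that such configurations cannot arise from your diamonds. Moreover, for (3) you explicitly concede the key extraction step --- deducing from ``no clasped triple'' the existence of a shrinkable triangle at a theta-vertex --- and only conjecture that the innermost-chord analysis will supply it. The paper closes exactly this hole with one combinatorial dichotomy, which is the missing engine of your induction and avoids Milnor invariants entirely. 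Given (1), every edge of $M_\Gamma$ joins different paths; then either some $E\in M_\Gamma$ joins the \emph{first} vertices $v_i\in\Gamma_i$, $v_j\in\Gamma_j$ after a theta-vertex $v$ --- in which case $v\,v_i\,v_j$ is a triangle, shrinking it is the inverse of cutting off a vertex, $C_\Gamma$ differs from $C_{\Gamma'}$ by a split unknot, and one inducts --- or no such edge exists, and then a chase around $v$ produces an explicit Borromean triple: take $v_1$ on $\Gamma_1$ closest to $v$ with partner $v_2\in\Gamma_2$; by assumption some vertex $v_3$ lies strictly between $v$ and $v_2$, and planarity forces its partner $v_4$ onto $\Gamma_3$ (a partner back on $\Gamma_1$ would give crossing chords in the disk bounded by $\Gamma_1\cup\Gamma_2$); iterating once more yields $v_5$ between $v$ and $v_4$ with partner $v_6\in\Gamma_1$ beyond $v_1$. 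The three diamonds of $E_1,E_2,E_3$ in the three coordinate planes are then cyclically clasped, i.e.\ a Borromean sublink, proving nontriviality directly. This single lemma yields (2) and (3) simultaneously; your proposal needs it and does not supply it.
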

\begin{proof}[Proof of Theorem~\ref{Th:unl}]
From the representation of the theta-subgraph on the coordinate rays of the octant 
it is clear that each two circles are unlinked if each edge of the matching connects two different paths of $\Gamma$.
On the other hand, if there is an edge $M_\Gamma$
connecting two vertices $v$ and $w$ on the same path, then take such an edge $E_1$ 
with the condition that between $v$ and $w$ there are no pairs of vertices connected by edges in $M_\Gamma$. 
There is a vertex of another edge $E_2$ lying on the same path between $v$ and $w$, 
for otherwise there is a bigonal face, which is a contradiction. The edge $E_2$ lies 
in another connected component of $\partial P\setminus\Gamma$. The other vertex of $E_2$ lies
either on the same path, or on another path. In both cases it is clear from the octant representation
that the circles are linked is the standard way (as in the Hopf link). This proves~(1).

Now let $C_\Gamma$ consist of mutually unlinked circles. By (1) each edge of $M_{\Gamma}$ connects vertices on different paths
of $\Gamma$. Let $\Gamma_1$, $\Gamma_2$ and $\Gamma_3$ be the paths of $\Gamma$ connecting the vertices $v$ and $w$.
If there is an edge $E\in M_\Gamma$ with vertices $v_i$ and $v_j$ on $\Gamma_i$ and $\Gamma_j$ such that there are no vertices 
on these paths between $v$ and $v_i$ and $v$ and $v_j$,
then $P$ has a triangle incident to $v$ and if $P\ne \Delta^3$, then $(P,\Gamma)$ is obtained from 
some pair $(P',\Gamma')$ by cutting off a vertex.  The graph $G(P')$ is obtained from $G(P)$ by deletion of $E$. 
The link $C_{\Gamma}$ is trivial if and only if $C_{\Gamma'}$ is trivial. If $P$ has no such edges $E$,
then consider a vertex $v_1$ on $\Gamma_1$ closest to $v$. If this vertex is $w$, then all the edges of $M_{\Gamma}$ connect 
vertices on $\Gamma_2$ and $\Gamma_3$ and are ``parallel'', in particular the first and the last edges are of the above type.
A contradiction. Thus, $v_1$ belongs to some edge $E_1\in M_{\Gamma}$ with the other vertex $v_2$ 
lying on the other path, say $\Gamma_2$. By our assumption, there is a vertex $v_3$ between $v$ and $v_2$. Let $v_3$
be the closest vertex to $v$.  Then $v_3\in E_2\in M_\Gamma$. The other vertex $v_4$ of $E_2$ belongs to $\Gamma_3$.
Again by our assumption there is a vertex $v_5$ between $v$ and $v_4$, $v_5\in E_3\in M_\Gamma$. Let $v_6$
be the other vertex of $E_3$. Then $v_6\in \Gamma_1$ and $v_1$ lies between $v$ and $v_6$. The edges $E_1$,
$E_2$ and $E_3$ correspond to Borromean rings in $C_\Gamma$. In particular, $C_\Gamma$ is a nontrivial link.
Thus, if $C_\Gamma$ does not contain Borromean rings, then $(P,\Gamma)$ is obtained from 
$(\Delta^3,\Gamma_0)$ by a sequence of operations of cutting off a vertex. In particular, $C_\Gamma$ is trivial. Together
with Example~\ref{ex:D30} this proves (2) and (3).
\end{proof}
\begin{example}\label{exthetaD}
In Fig.~\ref{Dodeclink} we show the link $C_\Gamma$ 
corresponding to a Hamiltonian theta-subgraph $\Gamma$ in the dodecahedron $P$.
It consists of $9$ mutually unlinked circles and contains many triples of Borromean rings.
The manifold $N(P,\widetilde{\Lambda}_{\Gamma})$ has a hyperbolic structure. The polytope $P_\Gamma$ 
is a right-angled hyperbolic polytope of finite volume with $2$ proper and $9$ ideal vertices. 
The complement $S^3\setminus C_{\Gamma}$ is glued of $8$ copies of $P_\Gamma$.
\end{example}
\begin{figure}
\begin{center}
\includegraphics[width=\textwidth]{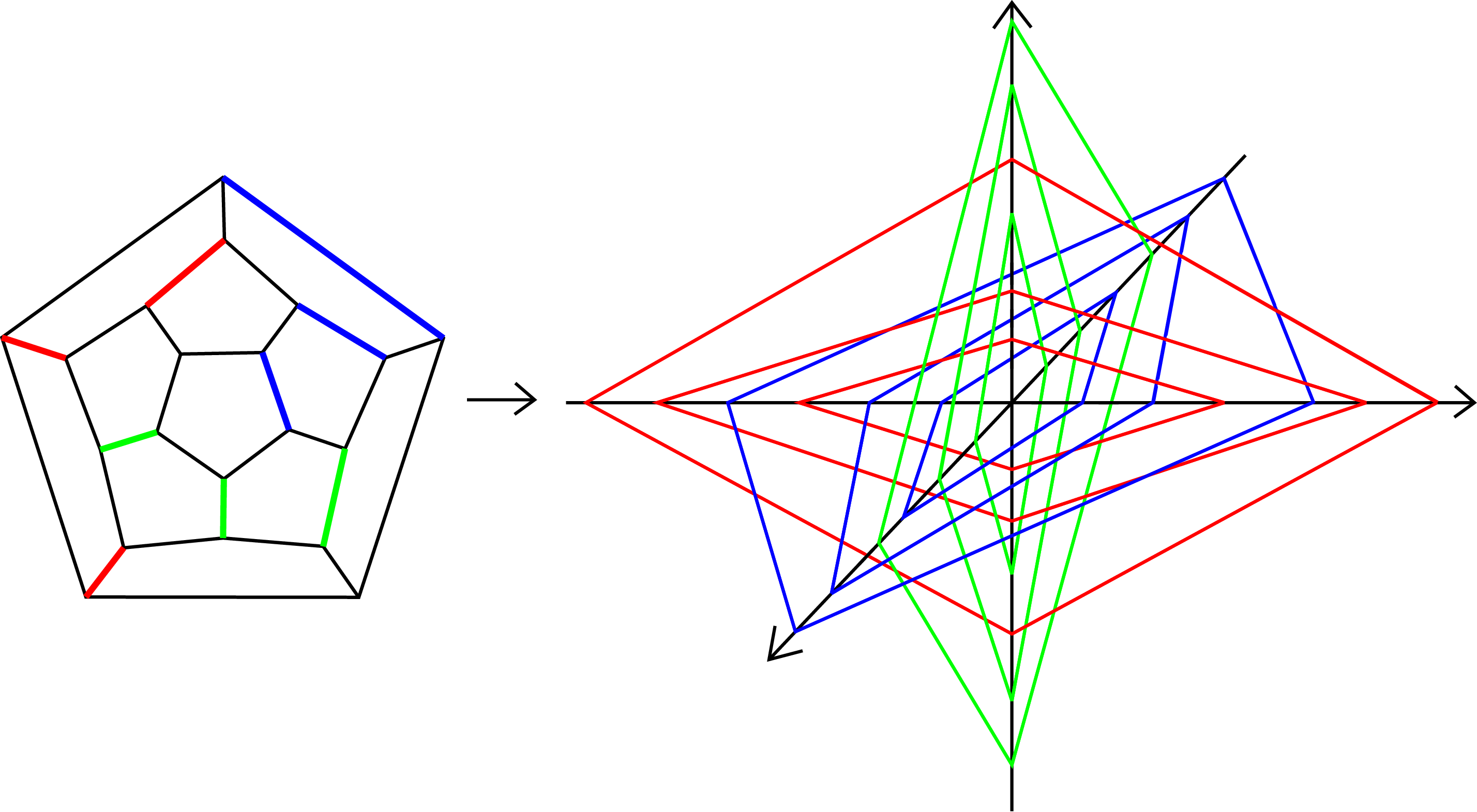}
\caption{The link corresponding to a Hamiltonian theta-subgraph in the dodecahedron}
\label{Dodeclink}
\end{center}
\end{figure}

\begin{definition}
For a segment $I$ and a point $x\notin I$ denote by $x*I$ the triangle spanned by $v$ and $I$.
\end{definition}
\begin{theorem}\label{K4UL}
Let $\Gamma$ be a Hamiltonian $K_4$-subgraph in a simple $3$-polytope $P$.
Then 
\begin{enumerate}
\item the link $C_\Gamma$ consists of mutually unlinked circles if and only if $M_{\Gamma}$ splits
into matchings $M_{\Gamma}(v_i)$ corresponding to vertices of $K_4$, such that each matching 
consists of edges connecting the vertices on different paths of $K_4$ containing $v_i$
and for any two edges $E_1\in M_{\Gamma}(v_i)$ and $E_2\in M_{\Gamma}(v_j)$, $i\ne j$
the triangles $v_i*E_1$ and $v_j*E_2$ do not intersect;
\item if $C_\Gamma$ consists of mutually unlinked circles and is nontrivial, then it contains 
a triple of Borromean rings; 
\item the link $C_\Gamma$ is trivial if and only if  $(P,\Gamma)$ is obtained from $(\Delta^3,G(\Delta^3))$
by a sequence of operations of cutting off a vertex.
\end{enumerate}
\end{theorem}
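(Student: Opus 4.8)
The plan is to mirror the proof of Theorem~\ref{Th:unl}, replacing the single octant model of the theta-graph by the model $N(Q,\Lambda_\Gamma)\simeq\mathbb R\mathcal Z_{\Delta^3}\simeq S^3$ described above, in which $Q\setminus\{v_i\}$ is identified with $\Delta^3\setminus\{\text{Origin}\}$ for each vertex $v_i$ of the $K_4$-graph, the three paths of $K_4$ emanating from $v_i$ run along the three coordinate rays, and the reflections $[a_1],\dots,[a_4]$ act on $\mathbb R^3$ by gluing copies of $\Delta^3$ into the octahedron $Oct^3$. Near each vertex $v_i$ this is exactly the local octant picture used in the theta case, so all local computations transfer verbatim; the new content is the interaction between the four local pictures.

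For item~(1), first I would observe that a matching edge $E\in M_\Gamma$ joins vertices lying on two paths of the $K_4$-graph. If these two paths share no common vertex of $K_4$ (opposite edges of $K_4$), then reading off the octahedral model shows the corresponding circles are Hopf-linked, so unlinkedness forces every matching edge to join two paths meeting at a common vertex $v_i$; this is precisely the splitting $M_\Gamma=\bigsqcup_i M_\Gamma(v_i)$. Within the octant at $v_i$ the theta-case computation shows that the circles of $E_1,E_2\in M_\Gamma(v_i)$ are unlinked exactly when each edge joins two \emph{distinct} paths through $v_i$, an edge joining a path to itself again producing a Hopf link. The genuinely new step is the cross-term: for $E_1\in M_\Gamma(v_i)$ and $E_2\in M_\Gamma(v_j)$ with $i\ne j$ the corresponding circles bound disks coming from the triangles $v_i*E_1$ and $v_j*E_2$, and they are unlinked precisely when these triangles can be realized disjointly on $\partial Q\simeq S^2$; an intersection of the triangles is shown to yield a Hopf sublink. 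Assembling these three failure modes gives the stated criterion.

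For item~(2), assuming $C_\Gamma$ is unlinked and nontrivial I would run the interleaving argument of Theorem~\ref{Th:unl} inside the octant at a vertex $v_i$ with $M_\Gamma(v_i)\ne\varnothing$. Choosing the matching edge on a fixed path closest to $v_i$ and chasing the constraint that no triangle $v_i*E$ may be \emph{innermost} (otherwise $(Q,\Gamma)$ reduces by cutting off a vertex), I would produce three matching edges $E_1,E_2,E_3$ that cyclically permute the three paths through $v_i$ and whose circles form Borromean rings. Since in the $K_4$ case each such edge yields a pair of circles rather than a single one, I must check that the three chosen circles close up to a genuine Borromean triple in the octahedral model and not to a split union; this is the one point where the extra $\mathbb Z_2$-factor of the $K_4$ construction has to be tracked carefully.

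For item~(3), the cutting-off construction adds two trivial circles in disjoint balls, so any $(Q,\Gamma)$ obtained from $(\Delta^3,G(\Delta^3))$ (where $M_\Gamma=\varnothing$ and $C_\Gamma$ is empty) carries a trivial link. Conversely, triviality implies via item~(2) that $C_\Gamma$ contains no Borromean rings; the contrapositive of the interleaving argument then forces an innermost triangle at some vertex, hence a shrinkable triangle of $Q$, so $(Q,\Gamma)$ arises from a smaller pair by cutting off a vertex, and induction on the number of facets reduces everything to $(\Delta^3,G(\Delta^3))$. The main obstacle throughout is item~(1)'s cross-term together with the bookkeeping in item~(2): unlike the theta case the matching is distributed over four vertices and each edge contributes two circles, so the combinatorial condition governing when the triangles can be made disjoint on $S^2$, and the verification that the interleaved circles close into a single Borromean triple, are where the real work lies.
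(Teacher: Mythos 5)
Your proposal is essentially the paper's own proof: the paper likewise works in the model $N(Q,\Lambda_\Gamma)\simeq \mathbb{R}\mathcal{Z}_{\Delta^3}\simeq S^3$, splits $M_\Gamma$ by assigning to each edge $E$ joining distinct paths the unique vertex $v(E)$ of $K_4$, handles same-vertex pairs by the theta-case computation, proves a lemma that a cross pair $E_1,E_2$ with $v(E_1)\ne v(E_2)$ is nontrivially linked if and only if the triangles $v(E_1)*E_1$ and $v(E_2)*E_2$ intersect (the endpoints then interleave on the common path $[v(E_1),v(E_2)]$, and the four circles form the $4$-link chain, not just a Hopf pair), and disposes of items (2) and (3) by the argument of Theorem~\ref{Th:unl}.

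Two points need correcting, though neither invalidates the plan. First, your ``opposite edges of $K_4$'' case is vacuous: a matching edge lies in one open face of $\Gamma$, and every face of the $K_4$-graph is a triangle whose three bounding paths pairwise meet at vertices of $K_4$, so once same-path edges are excluded the splitting $M_\Gamma=\bigsqcup_i M_\Gamma(v_i)$ is automatic from planarity --- your claimed Hopf linking there concerns a configuration that cannot occur. The genuine content at that step is excluding same-path edges, and a single such edge does \emph{not} by itself produce a Hopf link: as in the theta case one needs a companion edge forced by the absence of bigons (the paper's $w_1,w_2,w_3,w_4$ argument, with its two sub-cases of $4{+}4$ versus $4{+}2$ circles), so your clause ``an edge joining a path to itself again producing a Hopf link'' is only right if read as invoking that two-edge argument. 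Second, the verification you flag in item (2) dissolves rather than requiring extra $\mathbb{Z}_2$-bookkeeping: once condition (1) holds, the matchings can be isotoped toward the corresponding vertices and $C_\Gamma$ splits as two copies of each theta-link $C_{\Gamma(v_i)}$ lying in disjoint disks (this is exactly the paper's proof of the ``if'' direction of (1)), so any Borromean triple is found inside a single theta copy and items (2) and (3) reduce verbatim to Theorem~\ref{Th:unl}, which is precisely what the paper does.
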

\begin{proof}
If the condition of item (1) holds, then we can isotope all the matchings to be close to the corresponding vertices.
Then near each vertex we have the theta-subgraph $\Gamma(v_i)$ (obtained by shrinking to point the triangle of $\Gamma$
complementary to $v_i$) with the matching $M_{\Gamma}(v_i)$, and the link $C_\Gamma$ 
consists of two copies of each link $C_\Gamma(v_i)$ for all $i$ 
lying in disjoint disks. So $C_\Gamma$ consists of mutually unlinked circles.

Now let $C_\Gamma$ consist of mutually unlinked circles. If there is an edge in $M_\Gamma$ connecting the vertices
on the same path of $\Gamma$, then consider such an edge $E_2$ with ends $w_1$ and $w_2$ and no other pairs of vertices
between $w_1$ and $w_2$ connected by an edge in $M_\Gamma$. Since $P$ has no bigons, there is a vertex $w_3$ between $w_1$ and $w_2$. This vertex is connected by an edge $E_2\in M_\Gamma$ to some other vertex $w_4$. It is clear from 
the representation of $K_4$ as the graph of the simplex $\Delta^3$ 
with vertices $v_0=(0,0,0)$, $v_1=(1,0,0)$, $v_2=(0,1,0)$, $v_3=(0,0,1)$ that either
$E_1$ and $E_2$ correspond to $4$ pairs of circles linked in a standard way (if $w_4$ lies in the same path), or
$E_1$ corresponds to $4$ and $E_2$ corresponds to $2$ unlinked circles such that each circle of the second type
is linked in a standard way to two circles of the first type. A contradiction. Thus, each edge of  $M_\Gamma$
connects two vertices on different paths. Then for each edge $E\in M_\Gamma$ there is a unique vertex $v_i$ of $K_4$
such that $E$ lies in the triangle $[v_i,v_j,v_k]$ and connects the points on the paths $[v_i,v_j]$ and $[v_i,v_k]$.
Denote this vertex $v(E)$. Now the proof of item (1) follows from
\begin{lemma}
The link corresponding to two edges $E_1,E_2\in M_\Gamma$ connecting vertices on different paths of $K_4$
is nontrivial if and only if $v(E_1)\ne v(E_2)$ and the triangles $v(E_1)*E_1$ and $v(E_2)*E_2$ intersect 
(equivalently, the segments between a vertex of $E_1$ and $v(E_1)$ and a vertex of $E_2$ and $v(E_2)$ lying
both on $[v(E_1), v(E_2)]$ intersect). If this link is nontrivial, then it is the $4$-link chain like in Example~\ref{ex:ak}. 
\end{lemma}
\begin{proof} 
If $v(E_1)=v(E_2)$, then as in the above argument $C_\Gamma$ consists of two copies of the trivial link corresponding to
two edges on the theta-subgraph. These copies lie in disjoint disks, so $C_\Gamma$ is trivial.

If $v(E_1)\ne v(E_2)$ and  $v(E_1)*E_1\cap v(E_2)*E_2=\varnothing$, then $C_\Gamma$ is trivial, since it 
consists of four circles lying in disjoint balls.

If $v(E_1)\ne v(E_2)$ and  $v(E_1)*E_1\cap v(E_2)*E_2\ne\varnothing$, then the edge $[v(E_1),v(E_2)]$
contains the vertex $w_1$ of $E_1$ and the vertex $w_2$ of $E_2$, and these vertices lie in the order $(v(E_1), w_2, w_1, v(E_2))$.
Then each of the circles corresponding to $E_1$ is linked to each of the circles corresponding to $E_2$
in a standard way (like in the Hopf link) and these circles form the $4$-link chain. This finishes the proof.
\end{proof}
The proof of items (2) and (3) is the same as the proof of items (2) and (3) of Theorem~\ref{Th:unl}. 
\end{proof}

\section{Acknowledgements}
The author is grateful to V.M.~Buchstaber for stimulating questions that led to the expansion of the scope of this article.
The author is grateful to N.V.~Bogachev, A.A.~Gaifullin, V.Yu.~Gorchakov,  D.V.~Gugnin, D.P.~Ilyutko, I.Yu.~Limonchenko,  A.D.~Mednykh, T.E.~Panov, M.V.~Prasolov, V.A.~Shastin,  D.V.~Talalaev, D.A.~Tsygankov, A.Yu.~Vesnin  for fruitful discussions.

\end{document}